\numberwithin{equation}{section}
\newtheorem{theorem}{Theorem}
\newtheorem{lemma}{Lemma}
\newtheorem{proposition}{Proposition}
\newtheorem{remark}{Remark}
\numberwithin{theorem}{section}
\numberwithin{corollary}{section}
\numberwithin{lemma}{section}
\numberwithin{definition}{section}
\numberwithin{proposition}{section}
\numberwithin{remark}{section}
\newcommand{\dint}{\ds\int}
\newcommand{\ds}{\displaystyle}
\newcommand{\R}{\mathbb R}
\newcommand{\N}{\mathbb N}
\newcommand{\medint}{-\kern  -,375cm\int}
\newcommand{\diver}{\mathrm {div}}
\thanks{
$^{1}$ Dipartimento di Matematica e Applicazioni ``R. Caccioppoli'', Universit\`{a}
degli Studi di Napoli ``Federico II'', Complesso Monte S. Angelo, via Cintia
- 80126 Napoli, Italy; email: brandolini@unina.it; fchiacch@unina.it; cristina@unina.it.
\\
\indent $^2$ Institut Elie Cartan, UMR CNRS 7502, Universit\'e de Lorraine, F-54506 Vandoeuvre-les-Nancy Cedex, France; {antoine.henrot@univ-lorraine.fr}}
\subjclass{35J45; 35P05; 49G05}
\keywords{Dirichlet-Laplacian with a drift; Minimization of eigenvalues; Weighted Sobolev spaces}
\begin{document}
\title[Dirichlet-Laplacian with a drift]{Existence of minimizers for eigenvalues of the Dirichlet-Laplacian with a drift}
\author[\textsf{B.~Brandolini}]{Barbara Brandolini$^{1}$}
\author[\textsf{F.~Chiacchio}]{Francesco Chiacchio$^{1}$}
\author[\textsf{A.~Henrot}]{Antoine Henrot$^{2}$}
\author[\textsf{C.~Trombetti}]{Cristina Trombetti$^{1}$}

\begin{abstract}
This paper deals with the eigenvalue problem for the operator $L=-\Delta -x\cdot \nabla $ with Dirichlet boundary conditions. We are interested in proving the existence of a set minimizing any eigenvalue $\lambda_k$ of $L$ under a suitable measure constraint suggested by the structure of the operator. More precisely we prove that for any $c>0$ and $k\in \N$ the following minimization problem
$$
\min\left\{\lambda_k(\Omega): \> \Omega \>\mbox{quasi-open} \>\mbox{set}, \> \int_\Omega e^{|x|^2/2}dx\le c\right\}
$$
has a solution. 
\end{abstract}

\maketitle

\section{Introduction}
In this paper we are interested in the following eigenvalue problem for the Dirichlet-Laplacian with a drift term
\begin{equation}  \label{problem1}
\left\{ 
\begin{array}{ll}
-\Delta u - x\cdot \nabla u=\lambda u & \mbox{in}\> \Omega \\ 
u=0 & \mbox{on}\> \partial\Omega,
\end{array}
\right.
\end{equation}
or equivalently in the weighted eigenvalue problem
\begin{equation}  \label{problem}
\left\{ 
\begin{array}{ll}
-\mathrm{div} \left(e^{|x|^2/2} \nabla u\right)=\lambda  e^{|x|^2/2}
u & \mbox{in}\> \Omega \\ 
u=0 & \mbox{on}\> \partial\Omega,
\end{array}
\right.
\end{equation}
where $\Omega$ is an open subset of $\R^N$ ($N \ge 2$). Let us denote
$$
dm_N=\prod_{i=1}^N e^{\frac{x_i^2}{2}}dx_i, \qquad x=(x_1,...,x_N) \in 
\mathbb{R}^N,
$$
and let $H_0^1(\Omega;m_N)$ be the closure of $C_0^\infty(\Omega)$ with respect to the norm 
$$
||u||_{H_0^1(\Omega;m_N)}=\left(||u||^2_{L^2(\Omega;m_N)}+||\nabla u||^2_{L^2(\Omega;m_N)}\right)^{1/2}.
$$
The operator $\mathcal{R}:f \in L^2(\Omega; m_N) \to \varphi \in H_0^1(\Omega; m_N)$, where $\varphi$ is the unique solution to 
\begin{equation}\label{1}
\left\{ 
\begin{array}{ll}
-\mathrm{div} \left(e^{|x|^2/2} \nabla \varphi \right)=f  e^{|x|^2/2}
 & \mbox{in}\> \Omega \\ 
\varphi=0 & \mbox{on}\> \partial\Omega,
\end{array}
\right.
\end{equation}
is compact, self-adjoint and nonnegative (see Section 2); then the spectrum of $\mathcal R$ is purely discrete, it consists only of eigenvalues which can be ordered (according to their multiplicity):
$$
0< \lambda_1(\Omega) \le \lambda_2(\Omega) \le ... \le \lambda_k(\Omega) \le ...
$$
Our main result is  the following
\begin{theorem} \label{existence} 
For any $c>0$ the minimum 
\begin{equation}\label{min}
\mathrm{min} \{ \lambda_k(\Omega): \> \Omega \subset \R^N, \> \Omega \> quasi-open\> set, m_N(\Omega) \leq c\}
\end{equation}
is achieved.
\end{theorem}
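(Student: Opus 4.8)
The plan is to combine the variational (Courant--Fischer) description of $\lambda_k$ with a confinement mechanism coming from the growth of the weight, so as to reduce the problem to a bounded container where the classical shape-optimisation machinery applies. First I would record the min--max formula
\[
\lambda_k(\Omega)=\min_{\substack{S\subset H_0^1(\Omega;m_N)\\ \dim S=k}}\ \max_{u\in S\setminus\{0\}}\frac{\int_\Omega|\nabla u|^2\,dm_N}{\int_\Omega u^2\,dm_N},
\]
from which domain monotonicity ($\Omega_1\subset\Omega_2\Rightarrow\lambda_k(\Omega_1)\ge\lambda_k(\Omega_2)$) is immediate; in particular the constraint is saturated at any minimiser, and comparing with a fixed ball of measure $c$ centred at the origin shows $\inf\lambda_k=:m<\infty$. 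The difficulty is that $\Omega$ ranges over all of $\R^N$ and $dm_N=e^{|x|^2/2}dx$ is not finite, so a priori a minimising sequence could escape to infinity; on a fixed bounded box this obstruction is absent and existence is classical.

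The key observation I would exploit is that the growth of the weight confines low eigenvalues. The ground-state substitution $u=e^{-|x|^2/4}g$ is a unitary map $L^2(\Omega;m_N)\to L^2(\Omega;dx)$ under which, after one integration by parts,
\[
\int_\Omega|\nabla u|^2\,dm_N=\int_\Omega\bigl(|\nabla g|^2+V\,g^2\bigr)\,dx,\qquad \int_\Omega u^2\,dm_N=\int_\Omega g^2\,dx,
\]
with the confining potential $V(x)=\tfrac14|x|^2+\tfrac N2$. Hence $\lambda_k(\Omega)$ coincides with the $k$-th Dirichlet eigenvalue of the Schr\"odinger operator $-\Delta+V$ on $\Omega$. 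From this I would extract a uniform tightness estimate: if $\lambda_k(\Omega)\le M$, then every $L^2$-normalised eigenfunction $g$ with eigenvalue $\le M$ satisfies $\tfrac14\int_\Omega|x|^2g^2\,dx\le\int_\Omega Vg^2\le M$, whence $\int_{\{|x|>R\}}g^2\,dx\le 4M/R^2$ for all $R$. Thus the first $k$ eigenfunctions of any near-optimal $\Omega$ live, uniformly, in a fixed ball.

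With tightness in hand I would prove existence in two steps. Step (A), existence in a box: for fixed $R$ the weight satisfies $1\le e^{|x|^2/2}\le e^{R^2/2}$ on $B_R$, so the weighted Dirichlet form is equivalent to the standard one, the associated capacity and the notion of quasi-open set are unchanged, $\lambda_k$ is $\gamma$-lower semicontinuous and decreasing under inclusion, and $\Omega\mapsto m_N(\Omega)$ is a weighted volume with bounded density; the Buttazzo--Dal Maso existence theorem then produces a quasi-open minimiser $\Omega_R\subset B_R$, with $m_N(\Omega_R)=c$. Step (B), removing the box: denoting by $m(R)$ the resulting minimum, $m(R)$ is non-increasing in $R$ and $m(R)\to m$; I would then show, using the tightness of the eigenfunctions of $\Omega_R$, that there is a single radius $R_0$ with $\Omega_R\subset B_{R_0}$ for all large $R$. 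Once this is established, $\Omega_{R_0}$ is admissible in every $B_R$, the values $m(R)$ stabilise at $m(R_0)$, and therefore $m(R_0)=m$ is attained by $\Omega_{R_0}$, which is the desired global minimiser.

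I expect Step (B)---an honest a priori bound on the support of the minimisers, not merely of a minimising sequence---to be the main obstacle. Tightness only says that the eigenfunctions are small far from the origin, whereas an optimal set could in principle still carry measure in a region where these eigenfunctions are negligible. To rule this out I would argue by optimality: such a far-away portion contributes only an $O(R_0^{-2})$ amount to each Rayleigh quotient while consuming a fixed share of the budget $c$, so excising it and reallocating the freed measure near the origin---where the weight is smallest, hence volume is cheapest---strictly decreases $\lambda_k$ by strict domain monotonicity, contradicting minimality. The technical heart is to make this surgery quantitative: to control the eigenvalue increase under truncation by cutting off the $g_i$ against a smooth radial cut-off, and to quantify the strict gain from the enlargement near the origin. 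A secondary, more routine point is to check that the relaxation underlying Buttazzo--Dal Maso returns a genuine quasi-open set obeying $m_N(\Omega)\le c$ rather than a capacitary measure; as in the unweighted theory, the monotonicity of $\lambda_k$ together with the absolutely continuous, bounded-density constraint guarantees this.
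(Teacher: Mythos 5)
Your reduction to the harmonic oscillator and the resulting tightness bound $\int_{\{|x|>R\}}g^2\,dx\le 4M/R^2$ are correct, but Step (B) is a genuine gap, and the surgery you sketch to fill it does not close. First, ``strict domain monotonicity'' fails in the form you need: adding a small set near the origin (typically disjoint from, or weakly attached to, the existing configuration) need not decrease $\lambda_k$ at all for $k\ge 1$, since the new component only contributes eigenvalues above its own very large first eigenvalue. So excising a far-away piece of measure $\delta$ and reallocating $\delta$ near the origin produces a competitor whose $k$-th eigenvalue is $\lambda_k(\Omega_R)+O(R_0^{-2})$ \emph{minus nothing guaranteed}, which does not contradict optimality of $\Omega_R$. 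Second, even if some strict gain were available, you would have to show it dominates the truncation penalty uniformly in $R$ and in the (uncontrolled) geometry of the far-away piece; tightness of the eigenfunctions bounds their mass far out but says nothing about the measure the optimal set carries there. This uniform a priori bound on the \emph{support of minimizers} is exactly the hard point that forced Mazzoleni--Pratelli and Bucur to develop new techniques in the unweighted problem, and your outline does not supply a substitute.

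The irony is that your tightness computation is precisely the reason no box is needed here: the estimate $\frac14\int|x|^2 g^2\le M$ is the heart of the proof that $H^1(\R^N;m_N)$ embeds \emph{compactly} into $L^2(\R^N;m_N)$ on all of $\R^N$ (Theorem \ref{compact}), so the loss of compactness at infinity that plagues the Lebesgue-measure problem simply does not occur. The paper therefore runs a direct Buttazzo--Dal Maso-type argument on a minimizing sequence $\Omega_n$ in the whole space: it introduces the torsion-like functions $w_n$ solving $-\Delta w_n-x\cdot\nabla w_n=1$ in $\Omega_n$, extracts $w_n\to w$ in $L^2(\R^N;m_N)$, and takes $\hat\Omega=\{w>0\}$ as the candidate minimizer. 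The one delicate point --- showing that the limits $u^j$ of the normalized eigenfunctions actually lie in $H_0^1(\hat\Omega;m_N)$, i.e.\ vanish \emph{quasi-everywhere} (not merely a.e.) off $\hat\Omega$ --- is handled by a uniform $L^\infty$ bound on the $u_n^j$ (the reverse H\"older inequality of Section \ref{sec3}) combined with a maximum principle giving $|u_n^j|\le \Lambda M w_n$, hence $|u^j|\le \Lambda M w$ in the limit. Your proposal contains the right confinement mechanism but routes it through a reduction that recreates, rather than avoids, the hardest obstruction.
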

	
For the definition of quasi-open sets we remind the reader to Remark \ref{rem-qo1} and the references mentioned therein. Let us briefly discuss how our result is inserted in the literature. In the case of Laplace operator, the analogous minimization problem with Lebesgue measure constraint has been faced for the first time by  Buttazzo and Dal Maso in \cite{BD}. Their key assumption is that $\Omega$ varies in the class of sets contained in the same box $D$. Replacing $D$ with $\R^N$ is far from being simple due to the lack of compactness for generic sequences of sets. Very recently, this problem has been overcome independently by Mazzoleni and Pratelli in \cite{MP} and Bucur in \cite{B} with different techniques. In our case, the set $\Omega$ is allowed to vary in the whole $\R^N$ since the structure of the operator and the $m_N$ measure constraint allow us to earn the compact embedding of the weighted Sobolev space $H_0^1(\Omega;  m_N)$ into the weighted Lebesgue space $L^2(\Omega; m_N)$ (see Theorem \ref{compact} below). 

\noindent On the other hand problem \eqref{problem1} can be viewed as a prototype of a more general class of eigenvalue problems. For instance in \cite{HNR} (see also the references therein), among other things, the problem of minimizing the first eigenvalue of 
$$
\left\{
\begin{array}{ll}
-\diver\left(A(x)\nabla u\right)+v\cdot \nabla u+Vu=\lambda u &\mbox{in}\> \Omega \\
u=0 &\mbox{on}\>\partial\Omega
\end{array}
\right.
$$
  is addressed under various constraints on $A, v, V, \Omega$ by using a new notion of rearrangement. 
To our knowledge the existence of a domain minimizing a generic eigenvalue of problem \eqref{problem1} has not been established yet. In this paper we solve this question under the natural ``weighted volume constraint''.

Using an appropriate notion of rearrangement, see for instance  \cite{BMP,RCBM},  a Faber-Krahn type inequality can be proved:  the ball centered at the origin is the optimal domain for the first eigenvalue (i.e. the case $k=1$). Now all the other cases are open. In the classical situation (the Dirichlet-Laplacian with a constraint on the Lebesgue measure) only two cases are solved: for $k=1$, the minimizer is any ball (Faber-Krahn inequality), while for $k=2$, it is the union of two identical balls (Krahn-Sz\"ego inequality), see \cite{H} for more details. In our situation, even the case $k=2$ is not clear because of the measure $m_N$. In  \cite{BCHOT} we  study this problem and prove, among other things, that the optimal domain is not composed of two identical balls.

The proof of Theorem \ref{existence} can be summarized as follows. We consider a minimizing sequence of quasi-open sets $\Omega_n$ and we construct the sequence of functions $w_n\in H_0^1(\Omega_n;m_N)$ solving problem \eqref{1} in $\Omega_n$ with $f \equiv 1$. We prove that $w_n$ strongly converge to a function $w$ in $L^2(\R^N;m_N)$ and we define $\hat \Omega=\{w>0\}$. We prove that the eigenfunctions $u_j^n$ corresponding to $\lambda_j(\Omega_n)$ weakly converge to $u_j\in H_0^1(\hat \Omega;m_N)$. We conclude that $\lambda_j(\hat \Omega)$ is the minimum of problem \eqref{min}. The paper is organized as follows. In Section 2 we recall the compact embedding of $H_0^1(\R^N;m_N)$ into $L^2(\R^N;m_N)$ and we provide  an Hardy type inequality which in turn gives an improved embedding theorem. In Section 3 we prove a sharp reverse H\"older inequality for eigenfunctions that will be used to ensure the suitable convergence of $u_j^n$. Note that these results may have an interest by their own. Finally Section 4 contains the proof of Theorem \ref{existence}.

\section{Some properties of weighted Sobolev spaces}

\subsection{Weighted isoperimetric inequalities and rearrangements}

We start this section by recalling the isoperimetric inequality with respect to the measure $m_N$.  Let $\Omega\subset \R^N$ be a Lebesgue measurable set, we define the weighted perimeter of $\Omega$ with respect to $m_N$ by
$$
P_{m_N}(\Omega)=\sup\left\{\int_\Omega \diver \left({\bf k}(x)e^{|x|^2/2}\right)dx:\> {\bf k }\in C_0^1(\R^N,\R^N),\> |{\bf k}|\le 1\right\}.
$$
For any smooth set $\Omega \subset \R^N$ it reduces to
$$
P_{m_N}(\Omega)=\int_{\partial \Omega} e^{|x|^2/2}d\mathcal{H}^{N-1}.
$$ 
In \cite{BMP} (see also \cite{BCM,BCM1}) the authors prove the following result.

\begin{theorem}
For any set $\Omega \subset \R^N$ with finite $m_N$-measure,
\begin{equation}\label{II}
P_{m_N}(\Omega) \ge P_{m_N}(\Omega^\bigstar),
\end{equation}
where $\Omega^\bigstar$ is the ball centered at the origin, having the same $m_N$-measure as $\Omega$. Equality sign holds in \eqref{II} if and only if $\Omega=\Omega^\bigstar$.
\end{theorem}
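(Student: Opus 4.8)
The weight $w(x)=e^{|x|^2/2}$ is radial and \emph{strictly increasing} in $|x|$, and this monotonicity is the crucial structural feature: to keep the weighted volume $m_N(\Omega)$ fixed while minimizing the weighted perimeter, mass should sit as close to the origin as possible, which is exactly what a centered ball does. The plan is to prove the inequality by a two-step symmetrization, passing from an arbitrary $\Omega$ to its centered-ball rearrangement $\Omega^\bigstar$ through intermediate sets along which $m_N$ is preserved and $P_{m_N}$ does not increase. A preliminary reduction handles regularity: since $w$ is smooth and locally bounded away from $0$, I would first establish the inequality for smooth bounded sets and then recover the general finite-$m_N$-measure case from the lower semicontinuity of $P_{m_N}$ under $L^1_{loc}$ convergence together with the density of smooth sets in the class of sets of finite weighted perimeter.

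First I would use the coarea formula with the radial function $r=|x|$ to slice $\Omega$ into its spherical sections $\Omega_r=\Omega\cap\partial B_r$, writing $m_N(\Omega)=\int_0^\infty e^{r^2/2}\sigma(r)\,dr$ with $\sigma(r)=\mathcal H^{N-1}(\Omega_r)$. The first symmetrization is the \emph{spherical cap symmetrization}: replace each section $\Omega_r$ by the geodesic cap of $\partial B_r$ having the same $\mathcal H^{N-1}$-measure and centered at a fixed pole. Because $\sigma(r)$ is unchanged for every $r$ and $w$ is constant on each sphere, this operation preserves $m_N$; and since $w$ is radial, the classical fact that cap symmetrization does not increase the Euclidean perimeter upgrades to the weighted statement $P_{m_N}(\Omega^{\mathrm{cap}})\le P_{m_N}(\Omega)$. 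After this step $\Omega$ is replaced by a rotationally symmetric set described by a single angular profile $\phi(r)\in[0,\pi]$.

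The second, and decisive, step is a \emph{radial rearrangement} of the capped set toward the centered ball $B_R=\Omega^\bigstar$, the radius $R$ being fixed by $m_N(B_R)=m_N(\Omega)$. Here I would exploit the monotonicity of $w$: decomposing the weighted perimeter into its radial and tangential parts and comparing with the model, the increasing weight penalizes boundary located at large $|x|$ more heavily than the volume it encloses, so redistributing the profile $\phi$ into the full-then-empty profile ($\phi\equiv\pi$ for $r<R$, $\phi\equiv0$ for $r>R$) lowers $P_{m_N}$ while keeping $m_N$ fixed. The clean algebraic signpost that this is the right target is the identity $\tfrac{d}{dr}\big(e^{r^2/2}r^{N-1}\big)=e^{r^2/2}\big((N-1)r^{N-2}+r^N\big)$, which makes the $w$-weighted divergence of the radial field $x/|x|$ integrate to exactly $P_{m_N}(B_R)=N\omega_N e^{R^2/2}R^{N-1}$ over $B_R$, the value the rearrangement must saturate. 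The equality characterization then falls out by tracing both steps: equality forces every section to be already a cap, and, using the \emph{strict} monotonicity of $w$, forces the profile to be the full-then-empty one, i.e. $\Omega=\Omega^\bigstar$ up to a set of measure zero.

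The main obstacle is precisely this second step. Unlike the classical isoperimetric inequality, where Schwarz symmetrization in the flat metric suffices, here the non-constant radial weight couples the tangential perimeter on each sphere to the radial spreading of the set, and the monotone rearrangement must be shown to decrease the \emph{weighted} perimeter rather than the Euclidean one; controlling this coupling is where the convexity of $|x|^2/2$, equivalently the log-convexity of the density $w$, must enter. Indeed the statement is a special case of the log-convex density theorem, so an alternative route would be to invoke that general result, while the elementary self-contained argument sketched above is the one carried out in \cite{BMP}. One should also take care, in the finite-perimeter setting, that the cap symmetrization is performed for $\mathcal H^{N-1}$-a.e.\ $r$ and that the slicing and recombination are justified through the reduced boundary and the coarea formula.
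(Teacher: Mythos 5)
First, a point of comparison: the paper does not prove this theorem at all --- it is quoted verbatim from \cite{BMP} (``In \cite{BMP} \dots the authors prove the following result''), so there is no in-paper argument to match yours against. Your proposal should therefore be judged as a standalone proof, and as such it has a genuine gap at exactly the place you flag: the second step. Reducing by spherical cap symmetrization to an axially symmetric set with angular profile $\phi(r)$ is legitimate (for a radial density, cap symmetrization does not increase the weighted perimeter and preserves the weighted volume), but the claim that the \emph{monotonicity} of $w(x)=e^{|x|^2/2}$ alone forces the full-then-empty profile to minimize is not an argument, and cannot be made into one: mere radial monotonicity of the density is insufficient for centered balls to be isoperimetric --- log-convexity of the density is \emph{necessary} even for centered spheres to be stable (this is precisely the observation of Rosales--Ca\~nete--Bayle--Morgan \cite{RCBM} that led to the log-convex density conjecture). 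So the ``increasing weight penalizes boundary at large $|x|$ more than the volume it encloses'' heuristic is false in general, and your sketch never shows how the convexity of $r^2/2$ actually enters the one-dimensional comparison. The divergence identity you cite for the field $x/|x|$ only computes $P_{m_N}(B_R)$; it does not provide the calibration or rearrangement inequality needed to compare a general profile with the ball. The equality characterization inherits the same gap, since it is derived by ``tracing'' a step that was never established.

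To be fair, you state candidly that this step is the main obstacle and that the self-contained argument is ``the one carried out in \cite{BMP}'' --- which is effectively what the paper itself does by citation, and invoking \cite{BMP} (or, for the inequality without the uniqueness statement, Chambers' later proof of the log-convex density theorem) is a perfectly acceptable way to use the result. But a citation wrapped in a heuristic is not a proof: as a blind proof attempt, the proposal is an accurate roadmap of the known strategy with its central lemma --- the weighted one-dimensional (or profile) comparison where log-convexity does the work --- missing. If you want to complete it along these lines, the missing ingredient is a quantitative monotone-rearrangement lemma for the functional $\phi \mapsto P_{m_N}$ restricted to axially symmetric sets, in which $h(r)=r^2/2$ convex and increasing is used to show that moving mass inward radially decreases the weighted perimeter at fixed weighted volume; this is the technical heart of \cite{BMP} and is nontrivial.
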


\noindent As well-known, \eqref{II} turns out to be the key ingredient for a Faber-Krahn type inequality to hold (see Proposition \ref{faber}). To this aim we give the notion of rearrangement with respect to the measure $m_N$.

\noindent Let $\phi $ be a measurable real function defined in $\Omega$. The distribution function of $%
\phi $ with respect to the $m_N$-measure is defined by 
\begin{equation*}
\mu(t)=m_{N}\left( \{x\in \Omega:\>|\phi (x)|>t\}\right) ,\qquad t\geq 0,
\end{equation*}
while the decreasing rearrangement of $\phi $ with respect to the $m_N$-measure is the function 
\begin{equation*}
\phi ^{\ast }(s)=\sup \left\{ t\geq 0:\>\mu(t)>s\right\} ,\quad s\in (0,m_N(\Omega)).
\end{equation*}
It is easy to see that $\phi ^{\ast }$ is a nonincreasing, right-continuous
function defined in $(0,m_N(\Omega))$, equidistributed with $\phi $, that
means  $\phi $ and $\phi ^{\ast }$ have corresponding superlevel sets
with the same $m_N$-measure. This feature implies that 
\begin{equation*}
||\phi ||_{L^{p}(\Omega;m_N)}=||\phi ^{\ast }||_{L^{p}(0,m_N(\Omega))},\quad \forall p\geq 1.
\end{equation*} 
Now we set
\begin{equation*}
h(r)=N\omega _{N}e^{r^{2}/2}r^{N-1},\quad H(r)=\int_{0}^{r}h(t)dt,
\end{equation*}
where $\omega_N$ is the Lebesgue measure of the unit ball in $\R^N$. Then
$$
P_{m_N}(\Omega^\bigstar)=h\left(H^{-1}(m_N(\Omega))\right)
$$
and \eqref{II} reads as 
\begin{equation*}\label{II2}
P_{m_N}(\Omega)\ge h \left(H^{-1}(m_N(\Omega))\right).
\end{equation*}
We finally define $\phi ^{\bigstar }$, the $m_N$-symmetrization of $
\phi $, as follows
\begin{equation*}
\phi ^{\bigstar }(x)=\phi ^{\ast }\left( H(\left\vert x\right\vert )\right), \quad x \in \Omega^\bigstar.
\end{equation*}
$\phi^\bigstar$ is the only spherically symmetric function, nonincreasing along the radii, whose level sets are balls centered at the origin, with the same $m_N$ measure as the corresponding level sets of $|\phi|$. This definition immediately implies 
$$
||\phi||_{L^p(\Omega;m_N)}=||\phi^\bigstar||_{L^p(\Omega^\bigstar;m_N)},\qquad \forall p \ge 1.
$$
The following inequalities hold true.
\begin{proposition}[Hardy-Littlewood inequality]
Let $\phi,\psi \in L^2(\Omega; m_N)$; then
\begin{equation}\label{hl}
\int_\Omega |\phi \psi| dm_N \le \int_0^{m_N(\Omega)}\phi^\ast(s) \psi^\ast(s)ds=\int_{\Omega^\bigstar}\phi^\bigstar \psi^\bigstar dm_N.
\end{equation}
\end{proposition}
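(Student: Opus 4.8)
The plan is to prove the inequality via the classical layer-cake (Cavalieri) representation, which turns the product integral into a double integral of distribution functions, and then to exploit the elementary fact that decreasing rearrangements have \emph{nested} superlevel sets. First I would write, for the nonnegative functions $|\phi|,|\psi|$, the layer-cake identities $|\phi(x)|=\int_0^\infty \chi_{\{|\phi|>t\}}(x)\,dt$ and $|\psi(x)|=\int_0^\infty \chi_{\{|\psi|>\sigma\}}(x)\,d\sigma$. Multiplying, integrating against $dm_N$ and applying Tonelli's theorem (everything being nonnegative) I obtain
\[
\int_\Omega |\phi\psi|\,dm_N=\int_0^\infty\!\!\int_0^\infty m_N\big(\{|\phi|>t\}\cap\{|\psi|>\sigma\}\big)\,dt\,d\sigma.
\]

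Next I would bound the measure of the intersection by the smaller of the two measures: since $m_N(A\cap B)\le \min\{m_N(A),m_N(B)\}$ for any sets $A,B$, and writing $\mu_\phi,\mu_\psi$ for the respective distribution functions, this gives
\[
\int_\Omega |\phi\psi|\,dm_N\le \int_0^\infty\!\!\int_0^\infty \min\{\mu_\phi(t),\mu_\psi(\sigma)\}\,dt\,d\sigma.
\]
To identify the right-hand side with $\int_0^{m_N(\Omega)}\phi^\ast\psi^\ast\,ds$ I would run the same layer-cake computation for the one-dimensional rearrangements $\phi^\ast,\psi^\ast$ on the interval $(0,m_N(\Omega))$, now with respect to the Lebesgue measure. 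Because $\phi^\ast$ and $\psi^\ast$ are nonincreasing and right-continuous, their superlevel sets are intervals $\{\phi^\ast>t\}=(0,\mu_\phi(t))$ and $\{\psi^\ast>\sigma\}=(0,\mu_\psi(\sigma))$ (here I use that $\phi^\ast$ is equidistributed with $\phi$, so it has the same distribution function $\mu_\phi$). Their intersection is again an interval, of length exactly $\min\{\mu_\phi(t),\mu_\psi(\sigma)\}$, whence
\[
\int_0^{m_N(\Omega)}\phi^\ast(s)\psi^\ast(s)\,ds=\int_0^\infty\!\!\int_0^\infty \min\{\mu_\phi(t),\mu_\psi(\sigma)\}\,dt\,d\sigma,
\]
which, combined with the previous display, yields the first inequality in \eqref{hl}.

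Finally, the equality $\int_0^{m_N(\Omega)}\phi^\ast\psi^\ast\,ds=\int_{\Omega^\bigstar}\phi^\bigstar\psi^\bigstar\,dm_N$ I would obtain directly from the definition $\phi^\bigstar(x)=\phi^\ast(H(|x|))$ together with the radial structure of $m_N$. Passing to polar coordinates, the radial weight is $h(r)=N\omega_N e^{r^2/2}r^{N-1}=H'(r)$, so the substitution $s=H(r)$, $ds=h(r)\,dr$ transforms $\int_{\Omega^\bigstar}\phi^\ast(H(|x|))\psi^\ast(H(|x|))\,dm_N$ into $\int_0^{H(R)}\phi^\ast(s)\psi^\ast(s)\,ds$, where $R$ is chosen so that $H(R)=m_N(\Omega^\bigstar)=m_N(\Omega)$. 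I expect the only genuinely delicate point to be the measure-theoretic bookkeeping that makes the superlevel sets of $\phi^\ast$ and $\psi^\ast$ honestly equal to $(0,\mu_\phi(t))$ and $(0,\mu_\psi(\sigma))$ up to null sets — one must handle the right-continuity and the possible flat parts of the rearrangements with some care — but this is standard and does not affect the value of the integrals. Everything else reduces to a routine application of Tonelli's theorem and a one-dimensional change of variables.
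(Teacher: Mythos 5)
Your argument is correct and complete: the layer-cake reduction to $\int_0^\infty\int_0^\infty m_N(\{|\phi|>t\}\cap\{|\psi|>\sigma\})\,dt\,d\sigma$, the bound by $\min\{\mu_\phi(t),\mu_\psi(\sigma)\}$, the observation that the superlevel sets of the nonincreasing rearrangements are the intervals $(0,\mu_\phi(t))$ and $(0,\mu_\psi(\sigma))$ so that the same double integral computes $\int_0^{m_N(\Omega)}\phi^\ast\psi^\ast\,ds$ exactly, and the change of variables $s=H(r)$ for the final equality are all sound. The paper itself offers no proof of this proposition, treating it as a classical fact from the rearrangement literature it cites, so there is nothing to compare against; what you have written is the standard textbook argument, adapted correctly to the measure $m_N$, and it would serve as a valid proof here.
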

\begin{proposition}[P\'olya-Sz\"ego principle]\label{prop3.1}
If $\phi \in H^1(\R^N; m_N)$, $\phi \ge 0$, then $\phi^\bigstar \in H^1(\R^N;m_N)$ and
\begin{equation}\label{polya}
\int_{\R^N}|\nabla \phi|^2 dm_N \ge \int_{\R^N}|\nabla \phi^\bigstar|^2 dm_N.
\end{equation}
\end{proposition}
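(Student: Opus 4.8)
The plan is to follow the classical Pólya–Szegő scheme, in which the coarea formula reduces the inequality \eqref{polya} to a one-parameter family of weighted isoperimetric estimates on the level sets. First I would reduce to the case of a nonnegative Lipschitz function $\phi$ with bounded support: a general $\phi\in H^1(\R^N;m_N)$, $\phi\ge 0$, is approximated by truncation and mollification, giving $\phi_n\to\phi$ in $H^1(\R^N;m_N)$. Since the decreasing rearrangement is nonexpansive in $L^2(\R^N;m_N)$ we get $\phi_n^\bigstar\to\phi^\bigstar$ in $L^2(\R^N;m_N)$, while the energy bound for the $\phi_n$ keeps $\{\phi_n^\bigstar\}$ bounded in $H^1(\R^N;m_N)$; weak lower semicontinuity of the Dirichlet integral then yields both $\phi^\bigstar\in H^1(\R^N;m_N)$ and \eqref{polya} for the limit, provided it holds for each $\phi_n$.

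For such a Lipschitz $\phi$, introduce the distribution function $\mu(t)=m_N(\{|\phi|>t\})$ and, by Sard's theorem, work with a.e. regular value $t$, for which $\{|\phi|=t\}$ is a smooth hypersurface and $\nabla\phi\neq 0$ on it. The weighted coarea formula gives, for a.e. $t$,
$$
-\mu'(t)=\int_{\{|\phi|=t\}}\frac{1}{|\nabla\phi|}\,e^{|x|^2/2}\,d\h,
\qquad
\int_{\R^N}|\nabla\phi|^2\,dm_N=\int_0^\infty\left(\int_{\{|\phi|=t\}}|\nabla\phi|\,e^{|x|^2/2}\,d\h\right)dt .
$$
Applying the Cauchy–Schwarz inequality on the level surface to the factorization $e^{|x|^2/2}=|\nabla\phi|^{1/2}e^{|x|^2/4}\cdot|\nabla\phi|^{-1/2}e^{|x|^2/4}$ yields
$$
P_{m_N}(\{|\phi|>t\})^2=\left(\int_{\{|\phi|=t\}}e^{|x|^2/2}\,d\h\right)^2
\le (-\mu'(t))\int_{\{|\phi|=t\}}|\nabla\phi|\,e^{|x|^2/2}\,d\h .
$$

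Now I would invoke the weighted isoperimetric inequality \eqref{II}. Since $\phi$ and $\phi^\bigstar$ are equidistributed, their superlevel sets share the same distribution function $\mu$, so $P_{m_N}(\{|\phi|>t\})\ge P_{m_N}(\{\phi^\bigstar>t\})$. For the radial rearrangement $\phi^\bigstar$ the Cauchy–Schwarz step is an \emph{equality}, because $|\nabla\phi^\bigstar|$ is constant on each level sphere; hence the above chain becomes an identity for $\phi^\bigstar$ with the same $-\mu'(t)$. Combining these facts gives, for a.e. $t$,
$$
\int_{\{|\phi|=t\}}|\nabla\phi|\,e^{|x|^2/2}\,d\h
\ge \frac{P_{m_N}(\{|\phi|>t\})^2}{-\mu'(t)}
\ge \frac{P_{m_N}(\{\phi^\bigstar>t\})^2}{-\mu'(t)}
=\int_{\{\phi^\bigstar=t\}}|\nabla\phi^\bigstar|\,e^{|x|^2/2}\,d\h .
$$
Integrating in $t$ and applying the coarea formula once more, now to $\phi^\bigstar$, produces \eqref{polya}.

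The main obstacle, as always in these arguments, is the rigorous handling of the critical set $\{\nabla\phi=0\}$ and of the exceptional values $t$ at which $-\mu'(t)$ vanishes or is infinite: one must restrict to regular values, show that the critical set contributes nothing to the energy while controlling its effect on $\mu$, and justify the differentiation of $\mu$ together with the weighted coarea identities. A secondary technical point is checking that the density step is compatible with the weight $e^{|x|^2/2}$, so that the truncated-and-mollified functions indeed converge to $\phi$ in $H^1(\R^N;m_N)$ and their rearrangements converge as claimed.
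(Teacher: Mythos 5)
The paper itself offers no proof of this proposition: it is quoted as a known fact about rearrangements with respect to the measure $m_N$, with the reader sent to the references listed right after it (and to \cite{BMP}, where the isoperimetric inequality \eqref{II} on which everything hinges is established). So there is no in-paper argument to compare against; what you have written is the standard Talenti-type proof that those references carry out, and you correctly identify its two pillars, namely the weighted coarea formula and the application of \eqref{II} to the superlevel sets $\{|\phi|>t\}$, together with the correct reduction by density and lower semicontinuity. As a strategy this is exactly right.

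One link in your chain is asserted rather than proved, and it happens to be the only genuinely delicate point. For a.e.\ $t$ one only has the inequality
\begin{equation*}
-\mu'(t)\;\ge\;\int_{\{|\phi|=t\}}\frac{e^{|x|^2/2}}{|\nabla\phi|}\,d\h,
\end{equation*}
and this direction is precisely what makes your Cauchy--Schwarz bound for $\phi$ work. But when you claim that ``the chain becomes an identity for $\phi^\bigstar$ with the same $-\mu'(t)$'', you are implicitly using the \emph{reverse} inequality for the rearranged function: without equality in the coarea inequality for $\phi^\bigstar$ you only obtain
$\int_{\{\phi^\bigstar=t\}}|\nabla\phi^\bigstar|e^{|x|^2/2}\,d\h\ \ge\ P_{m_N}(\{\phi^\bigstar>t\})^2/(-\mu'(t))$,
which points the wrong way for your final comparison. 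The standard repair is to avoid arguing through $-\mu'(t)$ for $\phi^\bigstar$ altogether: compute directly, via the substitution $s=H(r)$,
\begin{equation*}
\int_{\R^N}|\nabla\phi^\bigstar|^2\,dm_N=\int_0^{m_N(\{\phi>0\})}\bigl(-(\phi^\ast)'(s)\bigr)^2\,I(s)^2\,ds,
\qquad I(s)=h\bigl(H^{-1}(s)\bigr),
\end{equation*}
and then show $\int_0^\infty I(\mu(t))^2/(-\mu'(t))\,dt\ \ge\ \int_0^\infty\bigl(-(\phi^\ast)'(s)\bigr)^2 I(s)^2\,ds$ by the change of variables $s=\mu(t)$, noting that the jump and singular parts of $\mu$ only increase the left-hand side. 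You do flag this circle of issues in your closing paragraph, so the gap is acknowledged rather than hidden, but as written the displayed equality for $\phi^\bigstar$ is not justified and should be replaced by the one-dimensional computation above.
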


\noindent For an exhaustive treatment on rearrangements see, for instance, 
\cite{K,Ta,H,Ke,Ba}.

\medskip
\subsection{Weighted Sobolev spaces and embedding theorems}

In order to prove the existence of the optimal set in \eqref{min} let us introduce the natural Sobolev spaces associated with problem \eqref{problem}. Let $\Omega$ be an arbitrary open subset of $\R^N$; let us consider the weighted Lebesgue space
$$
L^q(\Omega;m_N)=\left\{u:\Omega\to \R: \> \int_\Omega |u|^q dm_N<+\infty\right\}, \qquad q \ge 1,
$$
endowed with the norm
$$
||u||_{L^q(\Omega;m_N)}=\left(\int_\Omega |u|^q dm_N\right)^{1/q},
$$
and let $H_0^1(\Omega;m_N)$ be the closure of $C_0^\infty(\Omega)$ with respect to the norm 
$$
||u||_{H_0^1(\Omega;m_N)}=\left(||u||^2_{L^2(\Omega;m_N)}+||\nabla u||^2_{L^2(\Omega;m_N)}\right)^{1/2}.
$$
We initially observe that, for any $\Omega \subseteq \R^N$, if $u \in H_0^1(\Omega; m_N)$ and we define $v=ue^{|x|^2/4}$, we get the following equivalence.

\begin{proposition}
$$u \in H_0^1(\Omega; m_N) \Longleftrightarrow v \in H_0^1(\Omega), \quad |x|v \in L^2(\Omega).$$
\end{proposition}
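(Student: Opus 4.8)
The plan is to establish the equivalence by relating the weighted and unweighted Sobolev norms through the substitution $v = u e^{|x|^2/4}$. First I would verify the claim on the dense class $C_0^\infty(\Omega)$, where all manipulations are licit: for $u \in C_0^\infty(\Omega)$, the function $v = u e^{|x|^2/4}$ is also compactly supported and smooth, so $v \in C_0^\infty(\Omega) \subset H_0^1(\Omega)$ and trivially $|x|v \in L^2(\Omega)$. The key is the pointwise gradient identity obtained by the product rule,
\begin{equation*}
\nabla v = e^{|x|^2/4}\left(\nabla u + \frac{x}{2}\,u\right),
\end{equation*}
which I would rearrange to read $e^{|x|^2/4}\nabla u = \nabla v - \tfrac{x}{2}\,v$. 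Squaring and integrating (with weight $dx$ on the $v$-side versus $dm_N = e^{|x|^2/2}dx$ on the $u$-side), and noting $|v|^2 = |u|^2 e^{|x|^2/2}$, converts the weighted $L^2$ norms of $u$ and $\nabla u$ into unweighted $L^2$ norms of $v$, $\nabla v$, and $|x|v$. This yields an explicit relation between $\|u\|_{H_0^1(\Omega;m_N)}$ and the quantity $\|v\|_{H^1(\Omega)} + \||x|v\|_{L^2(\Omega)}$.

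With the gradient identity in hand, I would prove the two implications. For $(\Rightarrow)$: given $u \in H_0^1(\Omega;m_N)$, take $u_n \in C_0^\infty(\Omega)$ with $u_n \to u$ in the weighted norm, set $v_n = u_n e^{|x|^2/4}$, and use the norm relation to show $(v_n)$ is Cauchy in $H^1(\Omega)$ with $(|x|v_n)$ Cauchy in $L^2(\Omega)$; the limits identify $v = u e^{|x|^2/4}$ as an element of $H_0^1(\Omega)$ with $|x|v \in L^2(\Omega)$. The converse $(\Leftarrow)$ runs symmetrically: from $v \in H_0^1(\Omega)$ with $|x|v \in L^2(\Omega)$, approximate $v$ by $v_n \in C_0^\infty(\Omega)$ and transfer back via $u_n = v_n e^{-|x|^2/4}$, using the same identity read in reverse to control the weighted norm of $u$.

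The main obstacle is the bookkeeping in the norm equivalence, specifically ensuring the cross term does not spoil the two-sided bound. Expanding $|e^{|x|^2/4}\nabla u|^2 = |\nabla v|^2 - x \cdot v \nabla v + \tfrac{1}{4}|x|^2|v|^2$ introduces the cross term $-\int_\Omega x\cdot v\,\nabla v\,dx$, which for compactly supported smooth $v$ can be integrated by parts: since $\diver(x) = N$ and $x \cdot \nabla(|v|^2/2) = x \cdot v\nabla v$, one finds $\int_\Omega x\cdot v\,\nabla v\,dx = -\tfrac{N}{2}\int_\Omega |v|^2\,dx$. Thus the cross term reduces to a multiple of $\|v\|_{L^2(\Omega)}^2$, confirming that the three quantities $\|v\|_{L^2}$, $\|\nabla v\|_{L^2}$, $\||x|v\|_{L^2}$ together control and are controlled by the weighted norms of $u$, so the two conditions on $v$ are exactly equivalent to $u \in H_0^1(\Omega;m_N)$. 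The only care needed is that this integration by parts and the density argument are carried out on $C_0^\infty(\Omega)$ before passing to limits, which is precisely why both Sobolev spaces are defined as closures of $C_0^\infty(\Omega)$.
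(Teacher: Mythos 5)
The paper states this proposition without proof, so there is nothing to compare line by line; your computation is the natural one, and indeed the identity you derive,
$$
\int_\Omega |\nabla u|^2\,dm_N=\int_\Omega|\nabla v|^2\,dx+\frac14\int_\Omega|x|^2v^2\,dx+\frac N2\int_\Omega v^2\,dx,\qquad
\int_\Omega u^2\,dm_N=\int_\Omega v^2\,dx,
$$
is exactly the integration by parts the authors carry out (as an inequality) in the proof of Theorem~\ref{compact}. Your sign bookkeeping for the cross term is correct, and the forward implication, run through a $C_0^\infty$ approximating sequence for $u$ in the weighted norm, is complete: the identity shows $v_n$ is Cauchy in $H^1(\Omega)$ and $|x|v_n$ is Cauchy in $L^2(\Omega)$ simultaneously.

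The converse, however, has a genuine gap as written. You say: take $v_n\in C_0^\infty(\Omega)$ with $v_n\to v$ in $H^1(\Omega)$ and transfer back via $u_n=v_ne^{-|x|^2/4}$. But the identity tells you that $u_n$ is Cauchy in $H_0^1(\Omega;m_N)$ if and only if, in addition to $v_n$ being Cauchy in $H^1(\Omega)$, the sequence $|x|v_n$ is Cauchy in $L^2(\Omega)$ --- and this does \emph{not} follow from $H^1$ convergence alone, since the weight $|x|$ is unbounded on $\Omega\subseteq\R^N$. The hypothesis $|x|v\in L^2(\Omega)$ concerns the limit, not an arbitrary approximating sequence. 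What you actually need is that $C_0^\infty(\Omega)$ is dense in the space $\{v\in H_0^1(\Omega):|x|v\in L^2(\Omega)\}$ for the norm $\|v\|_{H^1}+\||x|v\|_{L^2}$. This is true but requires an extra truncation step: first replace $v$ by $\chi_R v$, where $\chi_R$ is a cutoff equal to $1$ on $B_R$, supported in $B_{2R}$, with $|\nabla\chi_R|\le C/R$; then $\||x|(1-\chi_R)v\|_{L^2}\to0$ by dominated convergence (using $|x|v\in L^2$) and $\|v\nabla\chi_R\|_{L^2}\le CR^{-1}\|v\|_{L^2}\to0$, so $\chi_Rv\to v$ in the combined norm. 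On the compactly supported function $\chi_Rv$ the weight $|x|$ is bounded, so the usual $C_0^\infty$ approximation in $H^1$ automatically converges in the weighted norm as well. With this insertion your argument closes; without it, the step ``the limits identify $u$ as an element of $H_0^1(\Omega;m_N)$'' is not justified.
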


The following Poincar\'e inequality is well-known (see for instance \cite{EK}).
\begin{proposition}
For every $u \in H^1(\R^N;m_N)$ it holds
\begin{equation}
\label{poincare}
\int_{\R^N} |\nabla u|^2dm_N \ge N \int_{\R^N} u^2dm_N.
\end{equation}
\end{proposition}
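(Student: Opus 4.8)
The plan is to obtain \eqref{poincare} from a single completion of the square, exploiting the fact that the drift field $x$ is (up to sign) the logarithmic gradient of the weight $e^{|x|^2/2}$. First I would prove the inequality for $u \in C_0^\infty(\R^N)$ and then pass to arbitrary $u \in H^1(\R^N;m_N)$ by density. The starting point is the trivially nonnegative quantity
$$
0 \le \int_{\R^N} |\nabla u + x u|^2 \, dm_N = \int_{\R^N} |\nabla u|^2 \, dm_N + 2\int_{\R^N} (x\cdot\nabla u)\,u \, dm_N + \int_{\R^N} |x|^2 u^2 \, dm_N,
$$
the point being that the multiplier $x$ in the cross term is chosen precisely so that, after integrating by parts, the two occurrences of $\int_{\R^N}|x|^2 u^2\, dm_N$ will cancel.

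The key computation is the middle term. Writing $(x\cdot\nabla u)\,u = \tfrac12\, x\cdot\nabla(u^2)$ and integrating by parts against the weight, I would use the elementary identity
$$
\diver\!\left(x\, e^{|x|^2/2}\right) = \left(N + |x|^2\right) e^{|x|^2/2},
$$
which yields
$$
\int_{\R^N} (x\cdot\nabla u)\,u\, dm_N = -\frac12 \int_{\R^N} u^2 \left(N + |x|^2\right) dm_N = -\frac{N}{2}\int_{\R^N} u^2\, dm_N - \frac12 \int_{\R^N} |x|^2 u^2\, dm_N.
$$
Substituting this back into the expansion above, the terms $\pm\int_{\R^N}|x|^2 u^2\, dm_N$ cancel and one is left with exactly $0 \le \int_{\R^N}|\nabla u|^2\, dm_N - N\int_{\R^N} u^2\, dm_N$, which is \eqref{poincare}.

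I do not expect a serious obstacle here; the two points to be careful about are routine. For $u \in C_0^\infty(\R^N)$ the integration by parts produces no boundary terms and every integral is finite, so the computation is justified. The density step is then clean because the final inequality \eqref{poincare} involves only $\|\nabla u\|_{L^2(\R^N;m_N)}^2$ and $\|u\|_{L^2(\R^N;m_N)}^2$, both continuous along $H^1(\R^N;m_N)$-convergent sequences, so it suffices to have established it on the dense subspace $C_0^\infty(\R^N)$ (in particular, no separate control of the auxiliary integral $\int_{\R^N}|x|^2u^2\,dm_N$ is needed once the cancellation is performed). As a remark I might note the equivalent route through the substitution $v = u\,e^{|x|^2/4}$ already introduced above: it reduces \eqref{poincare} to the harmonic-oscillator ground-state bound $\int_{\R^N}|\nabla v|^2\, dx + \tfrac14\int_{\R^N}|x|^2 v^2\, dx \ge \tfrac{N}{2}\int_{\R^N} v^2\, dx$, proved by the analogous completion of the square. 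Finally, the constant $N$ is sharp and attained, since equality forces $\nabla u + xu = 0$, i.e. $u = c\,e^{-|x|^2/2}$, which indeed belongs to $H^1(\R^N;m_N)$.
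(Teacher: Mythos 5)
Your proof is correct. Note, however, that the paper itself gives no proof of this proposition: it is stated as ``well-known'' with a citation to Escobedo--Kavian \cite{EK}, so there is nothing internal to compare against. Your completion-of-the-square argument, using $0\le\int_{\R^N}|\nabla u+xu|^2\,dm_N$ together with the identity $\diver\bigl(x\,e^{|x|^2/2}\bigr)=(N+|x|^2)\,e^{|x|^2/2}$, is the standard route and the cancellation of the $\int_{\R^N}|x|^2u^2\,dm_N$ terms works exactly as you say; the sharpness claim is also right, since $u=e^{-|x|^2/2}$ lies in $H^1(\R^N;m_N)$ and realizes equality. The only step worth one extra line is the density of $C_0^\infty(\R^N)$ in $H^1(\R^N;m_N)$ (the paper only \emph{defines} $H_0^1$ as such a closure, and uses $H^1(\R^N;m_N)$ for the full weighted space): this follows from the usual cutoff-plus-mollification argument, since $\|u\,\nabla\eta_R\|_{L^2(\R^N;m_N)}\le \tfrac{C}{R}\|u\|_{L^2(\R^N;m_N)}\to 0$, and is also contained in \cite{EK}.
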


The following  theorem provides the compact embedding of $H^1(\R^N; m_N)$ into $L^2(\R^N;m_N)$. Nevertheless this result can be found in \cite{EK}, for the reader's convenience we recall it here.

\begin{theorem}\label{compact}
The weighted Sobolev space $H^1(\R^N; m_N)$ is compactly embedded into the weighted Lebesgue space $L^2(\R^N;m_N)$.
\end{theorem}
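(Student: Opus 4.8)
The plan is to eliminate the weight through the substitution already introduced before the Proposition above, thereby reducing the statement to a classical compactness result in the \emph{unweighted} Sobolev space. Given $u\in H^1(\R^N;m_N)$, set $v=u\,e^{|x|^2/4}$. A direct computation, legitimate on $C_0^\infty(\R^N)$ and then extended by density, gives
\[
\int_{\R^N} u^2\,dm_N=\int_{\R^N} v^2\dx ,
\]
and, after writing $x\,v\cdot\nabla v=\tfrac12\, x\cdot\nabla(v^2)$ and integrating by parts the cross term,
\[
\int_{\R^N}|\nabla u|^2\,dm_N=\int_{\R^N}\Big|\nabla v-\tfrac{x}{2}v\Big|^2\dx
=\int_{\R^N}|\nabla v|^2\dx+\frac{N}{2}\int_{\R^N}v^2\dx+\frac14\int_{\R^N}|x|^2 v^2\dx .
\]
Consequently, a sequence $u_n$ is bounded in $H^1(\R^N;m_N)$ if and only if $v_n=u_n e^{|x|^2/4}$ is bounded in $H^1(\R^N)$ and, in addition, $|x|\,v_n$ is bounded in $L^2(\R^N)$; moreover, by the first identity, $u_n\to u$ in $L^2(\R^N;m_N)$ is equivalent to $v_n\to v$ in $L^2(\R^N)$. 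It therefore suffices to show that any sequence $v_n$ bounded in $H^1(\R^N)$ with $\int_{\R^N}|x|^2 v_n^2\dx\le C$ admits a subsequence converging strongly in $L^2(\R^N)$.

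To prove this I would first invoke the classical Rellich--Kondrachov theorem: being bounded in $H^1(\R^N)$, the family $v_n$ is bounded in $H^1(B_R)$ for every $R$, so a diagonal extraction over $R\in\N$ produces a subsequence (not relabelled) converging in $L^2_{\mathrm{loc}}(\R^N)$ and a.e.\ to some $v\in H^1(\R^N)$. The only obstruction to promoting this local convergence to a global one is a loss of mass at infinity, and this is exactly where the additional integrability $|x|\,v_n\in L^2(\R^N)$ is decisive: for every $R>0$,
\[
\int_{\{|x|>R\}} v_n^2\dx\le\frac{1}{R^2}\int_{\{|x|>R\}}|x|^2 v_n^2\dx\le\frac{C}{R^2},
\]
so the tails of $v_n^2$ are uniformly small, and by Fatou the same bound holds for the limit $v$.

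Combining the two facts yields strong convergence. Given $\eps>0$, split
\[
\int_{\R^N}|v_n-v|^2\dx=\int_{B_R}|v_n-v|^2\dx+\int_{\{|x|>R\}}|v_n-v|^2\dx ,
\]
choose $R$ so large that the last integral is below $\eps$ for all $n$ (using the uniform tail estimate above for both $v_n$ and $v$), and then use the $L^2(B_R)$ convergence to make the first integral smaller than $\eps$ for $n$ large. Hence $v_n\to v$ in $L^2(\R^N)$, and by the norm identity $u_n=v_n e^{-|x|^2/4}\to u$ in $L^2(\R^N;m_N)$, which establishes the compactness of the embedding. I expect the genuinely delicate point to be precisely the control of the behaviour at infinity: Rellich--Kondrachov alone gives only local compactness, and on the unbounded domain $\R^N$ one must rule out the escape of mass. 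The structure of the weight $m_N=e^{|x|^2/2}$ is exactly what supplies the missing tightness, through the term $\tfrac14\int|x|^2 v^2$ appearing in the transformed energy; without it the embedding would fail.
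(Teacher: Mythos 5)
Your argument is correct and follows essentially the same route as the paper: the substitution $v=u\,e^{|x|^2/4}$, the resulting bounds on $\int|\nabla v_n|^2\dx$ and $\int|x|^2v_n^2\dx$, and the use of the second bound to control the tails $\int_{\{|x|>R\}}v_n^2\dx\le C/R^2$ are exactly the paper's proof. The only (inessential) difference is in how local compactness is obtained: you use Rellich--Kondrachov on balls plus a diagonal extraction, whereas the paper verifies the Fr\'echet--Kolmogorov criterion in $L^2(\R^N)$ via the translation estimate coming from the gradient bound; both are standard and equally valid here.
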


\begin{proof}
Let $u_n \in H^1(\R^N; m_N)$ be such that 
\begin{equation}\label{111}
\int_{\R^N}|\nabla u_n|^2 dm_N \le C,\qquad n \in \N,
\end{equation}
and let $v_n=u_ne^{|x|^2/4}$. Integrating by parts we get
$$
\int_{\R^N}|\nabla v_n|^2dx+\frac{1}{4}\int_{\R^N}|x|^2v_n^2dx \le \int_{\R^N}|\nabla u_n|^2 dm_N
$$
and \eqref{111} immediately gives
\begin{eqnarray}
\int_{\R^N}|\nabla v_n|^2dx & \le C \label{a}
\\
\int_{\R^N}|x|^2v_n^2dx & \le C. \label{aa}
\end{eqnarray}
In order to prove the compactness of the sequence $\{v_n\}$ in $L^2(\R^N)$ it is enough  to show that
for any $\varepsilon >0$ there exist a constant $\delta >0$ and a set $D\subset \R^N$ such that
\begin{equation}
 \int_{\R^N} |v_n(x+\tau)-v_n(x)|^2 dx  <\varepsilon^2, \qquad \forall n \in \N, \> \forall \tau \in \R^N: \> |\tau|<\delta, \label{2}
\end{equation}
\begin{equation}
 \int_{\R^N \setminus \bar D} v_n^2 dx <\varepsilon^2, \qquad \forall n \in \N. \label{3} 
\end{equation} 
\eqref{2} is an immediate consequence of \eqref{a}. In order to prove \eqref{3} let us consider a ball $B_R$ centered at the origin, with radius $R$; by \eqref{aa} we get
$$
\int_{\R^N \setminus B_R} v_n^2 dx \le \frac{C}{R^2}
$$
and choosing $R$ in such a way that $\frac{C}{R^2}<\varepsilon^2$ we have \eqref{3}. Then, up to a subsequence, $v_n$ strongly converge to a function $v$ in $L^2(\R^N)$. Let $u=ve^{-|x|^2/4}$. Clearly $u \in L^2(\R^N; m_N)$ and 
$u_n$ strongly converge to $u$ in $L^2(\R^N; m_N)$.

\end{proof}

By the above result, as mentioned in the Introduction, the operator $\mathcal{R}:f \in L^2(\Omega; m_N) \to \varphi \in H_0^1(\Omega; m_N)$, where $\varphi$ is the unique solution to 
\begin{equation*}
\left\{ 
\begin{array}{ll}
-\mathrm{div} \left(e^{|x|^2/2} \nabla \varphi \right)=f  e^{|x|^2/2}
 & \mbox{in}\> \Omega \\ 
\varphi=0 & \mbox{on}\> \partial\Omega,
\end{array}
\right.
\end{equation*}
is compact; it is clearly self-adjoint and nonnegative, then the spectrum of $\mathcal R$ consists only of eigenvalues which can be ordered (according to their multiplicity):
$$
0< \lambda_1(\Omega) \le \lambda_2(\Omega) \le ... \le \lambda_k(\Omega) \le ...
$$
Moreover, for every $k \in \N$ and every $\lambda_k(\Omega)$ the following min-max formula holds 
\begin{equation}\label{minmax}
\lambda_k(\Omega)= \min_{\begin{array}{c}
V \mbox{ subspace of dim-}\\
\mbox{ension $k$ of } H_0^1(\Omega;m_N)
\end{array}} 
\max_{v\in V}\;\frac{\int_\Omega |\nabla u|^2 dm_N}{\int_\Omega u^2 dm_N}.
\end{equation}

Problem \eqref{min} is completely solved when $k=1$. Indeed, using  P\'olya-Sz\"ego inequality \eqref{polya} and the variational characterization of the first eigenvalue, arguing as for the Dirichlet-Laplacian, the following result can be easily proven.
\begin{proposition}\label{faber}
Let $\Omega$ be an open subset of $\R^N$ with finite $m_N$ measure. Then
$$
\lambda_1(\Omega) \ge \lambda_1(\Omega^\bigstar).
$$
\end{proposition}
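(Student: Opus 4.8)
The plan is to exploit the variational characterization of the first eigenvalue together with the weighted P\'olya--Sz\"ego principle of Proposition \ref{prop3.1}, in complete analogy with the classical Faber--Krahn argument. Specializing the min--max formula \eqref{minmax} to $k=1$ gives
$$
\lambda_1(\Omega)=\min_{u\in H_0^1(\Omega;m_N)\setminus\{0\}}\frac{\int_\Omega|\nabla u|^2\,dm_N}{\int_\Omega u^2\,dm_N},
$$
and, since $\mathcal R$ is compact and self-adjoint, this minimum is attained by a first eigenfunction $u_1$.

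First I would reduce to a nonnegative competitor: because $|\nabla|u_1||=|\nabla u_1|$ almost everywhere, replacing $u_1$ by $|u_1|$ leaves the Rayleigh quotient unchanged, so I may assume $u_1\ge 0$. Extending $u_1$ by zero outside $\Omega$ produces a nonnegative function of $H^1(\R^N;m_N)$, whence Proposition \ref{prop3.1} applies and yields
$$
\int_\Omega|\nabla u_1|^2\,dm_N=\int_{\R^N}|\nabla u_1|^2\,dm_N\ge\int_{\R^N}|\nabla u_1^\bigstar|^2\,dm_N=\int_{\Omega^\bigstar}|\nabla u_1^\bigstar|^2\,dm_N.
$$
By the equidistribution property of the $m_N$-rearrangement recalled above one also has $\int_\Omega u_1^2\,dm_N=\int_{\Omega^\bigstar}(u_1^\bigstar)^2\,dm_N$. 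Combining the two relations shows that the Rayleigh quotient of $u_1^\bigstar$ over $\Omega^\bigstar$ does not exceed that of $u_1$ over $\Omega$, hence is bounded above by $\lambda_1(\Omega)$. To conclude it suffices to insert $u_1^\bigstar$ as a test function in the variational characterization of $\lambda_1(\Omega^\bigstar)$, obtaining
$$
\lambda_1(\Omega^\bigstar)\le\frac{\int_{\Omega^\bigstar}|\nabla u_1^\bigstar|^2\,dm_N}{\int_{\Omega^\bigstar}(u_1^\bigstar)^2\,dm_N}\le\frac{\int_\Omega|\nabla u_1|^2\,dm_N}{\int_\Omega u_1^2\,dm_N}=\lambda_1(\Omega).
$$

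The only point requiring care, and the step I expect to be the main obstacle, is the admissibility of $u_1^\bigstar$, namely that $u_1^\bigstar\in H_0^1(\Omega^\bigstar;m_N)$. This holds because the superlevel sets of $u_1^\bigstar$ are balls centered at the origin having the same $m_N$-measure as the corresponding superlevel sets of $u_1$; in particular the support of $u_1^\bigstar$ is contained in the ball of $m_N$-measure $m_N(\{u_1>0\})\le m_N(\Omega)=m_N(\Omega^\bigstar)$, that is, in $\overline{\Omega^\bigstar}$. Since Proposition \ref{prop3.1} also guarantees $u_1^\bigstar\in H^1(\R^N;m_N)$, a function of $H^1(\R^N;m_N)$ supported in $\overline{\Omega^\bigstar}$ belongs to $H_0^1(\Omega^\bigstar;m_N)$, which legitimizes its use as a competitor and completes the argument.
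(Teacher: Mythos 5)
Your argument is correct and is precisely the route the paper has in mind: the paper states that Proposition \ref{faber} follows ``easily'' from the P\'olya--Sz\"ego principle \eqref{polya} and the variational characterization of $\lambda_1$, and omits the details. Your write-up supplies exactly those details (reduction to $|u_1|$, extension by zero, equidistribution of the $L^2$ norms, and the admissibility of $u_1^\bigstar$ as a test function in $H_0^1(\Omega^\bigstar;m_N)$), so there is nothing to add.
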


\medskip
\subsection{An Hardy type inequality and consequences}

In \cite{EK} the authors, among other things, prove that, if $N \ge 3$ and $2^\ast=\frac{2N}{N-2}$, then
\begin{eqnarray*}
S&:=&\inf\left\{ \frac{\int_{\R^N}|\nabla \varphi|^2 dx}{\left(\int_{\R^N}|\varphi|^{2^\ast}dx\right)^{2/2^\ast}}:\> \varphi \in H^1(\R^N)\setminus\{0\} \right\} 
\\
& =&\inf\left\{ \frac{\int_{\R^N}|\nabla \varphi|^2 dm_N}{\left(\int_{\R^N}|\varphi|^{2^\ast}dm_N\right)^{2/2^\ast}}:\> \varphi \in H^1(\R^N;m_N)\setminus\{0\} \right\} 
\end{eqnarray*}
and, as a corollary, they get that for every $u \in H^1(\R^N;m_N)$
$$
\int_{\R^N}|\nabla u|^2 dm_N \ge S \left(\int_{\R^N}u^{2^\ast}dm_N\right)^{2/2^\ast}+\frac N 2 \int_{\R^N}u^2dm_N.
$$
Moreover, by interpolation between $L^2(\R^N;m_N)$ and $L^{2^\ast}(\R^N;m_N)$ they obtain that for any $2 \le q \le 2^\ast$ there exists a positive constant $C$ such that if $\frac 1 q= \frac a 2 + \frac{1-a}{2^\ast}$ and $u \in H^1(\R^N;m_N)$ then
$$
||u||_{L^q(\R^N;m_N)}\le C ||u||_{L^2(\R^N;m_N)}^a ||\nabla u||_{L^2(\R^N;m_N)}^{1-a}.
$$

We go further by proving a Hardy type inequality with respect to the measure $m_N$ (see for example \cite{T,BCT1}). This inequality, as in the classical case, will imply that, if $N \ge 3$,  
$H^1(\R^N;m_N)$ is continuously embedded into the weighted Lorentz space $L^{2^\ast,2}(\R^N;m_N)$  and a fortiori in $L^{2^\ast}(\R^N;m_N)$. When $N=2$ we gain that $H^1(\R^2;m_2)$ is continuously embedded into a suitable Orlicz space.

\noindent Let 
\begin{equation*}\label{rho}
\rho_N(r)=\frac{r^{1-N}e^{-r^2/2}}{\int_r^{+\infty}t^{1-N}e^{-t^2/2}dt}, \qquad r \ge 0.
\end{equation*}
Clearly
$$
\lim_{r \to 0^+} \rho_N(r)=+\infty,\qquad \lim_{r \to +\infty}\rho_N(r)=+\infty,
$$
and
$$
\lim_{r \to 0^+}r\rho_N(r)=N-2 \quad \mbox{if}\> N\ge 3, \quad \lim_{r\to 0^+}r\log \left(\frac 1 r\right)\rho_N(r)=1 \quad \mbox{if}\> N=2.
$$
Moreover $\rho_N$ solves the following differential equation
\begin{equation}\label{eqrho}
\rho_N'(r)+\frac{N-1}{r}\rho_N(r)+r\rho_N(r)=\rho_N^2(r);
\end{equation}
by differentiating \eqref{eqrho} we immediately get that $\rho_N$ cannot have positive maxima. Thus it has a unique minimum point $T>0$ where $\rho_N(T)>0$.

\begin{lemma}\label{unidim}
For every $\psi \in C_0^\infty(0,+\infty)$ it holds
\begin{equation*}
\int_0^{+\infty}(\psi'(r))^2 r^{N-1}e^{r^2/2}dr \ge \frac 1 4  \int_0^{+\infty}\psi(r)^2 \rho_N(r)^2r^{N-1}e^{r^2/2}dr.
\end{equation*}
Moreover $\frac 1 4$ is sharp.
\end{lemma}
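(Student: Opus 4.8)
The plan is to prove the inequality by a completion of squares (a ``ground state substitution''), the point being that the weight $\frac14\rho_N^2$ is \emph{exactly} the optimal Hardy weight for the one-dimensional operator $-\frac1w(w\,\cdot\,')'$ with $w(r)=r^{N-1}e^{r^2/2}$. First I would record the elementary identity $w'(r)/w(r)=\frac{N-1}{r}+r$ and rewrite the differential equation \eqref{eqrho} in the compact form $(\rho_N w)'=\rho_N^2 w$ on $(0,+\infty)$. Indeed $(\rho_N w)'=\rho_N' w+\rho_N w'=\bigl(\rho_N'+(\tfrac{N-1}{r}+r)\rho_N\bigr)w$, and \eqref{eqrho} says precisely that the bracket equals $\rho_N^2$. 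This reformulation is the whole engine of the proof.

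Next, for $\psi\in C_0^\infty(0,+\infty)$ I would integrate by parts the cross term. Since $\psi$ has compact support in $(0,+\infty)$, all boundary contributions vanish and, using the identity above,
\begin{equation*}
\int_0^{+\infty}\rho_N\,\psi\,\psi'\,w\,dr=\frac12\int_0^{+\infty}\rho_N w\,(\psi^2)'\,dr=-\frac12\int_0^{+\infty}(\rho_N w)'\,\psi^2\,dr=-\frac12\int_0^{+\infty}\rho_N^2\,\psi^2\,w\,dr.
\end{equation*}
Expanding the manifestly nonnegative quantity $\int_0^{+\infty}\bigl(\psi'+\tfrac12\rho_N\psi\bigr)^2 w\,dr$ and inserting this computation gives
\begin{equation*}
0\le\int_0^{+\infty}\Bigl(\psi'+\tfrac12\rho_N\psi\Bigr)^2 w\,dr=\int_0^{+\infty}(\psi')^2 w\,dr-\frac14\int_0^{+\infty}\rho_N^2\,\psi^2\,w\,dr,
\end{equation*}
which is exactly the asserted inequality.

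For the sharpness of $\frac14$ I would analyse the equality case of the square. Equality forces $\psi'+\tfrac12\rho_N\psi\equiv0$, i.e. $\psi$ is a constant multiple of the ground state $\Phi(r):=g(r)^{1/2}$, where $g(r)=\int_r^{+\infty}t^{1-N}e^{-t^2/2}dt$; note that $\rho_N=-g'/g$, so $(\log\Phi)'=-\tfrac12\rho_N$, and one checks the companion identity $\rho_N w=1/g$. Since $\Phi\notin C_0^\infty(0,+\infty)$ the inequality is in fact strict for every admissible $\psi$, but $\frac14$ cannot be improved: I would test with truncations $\psi_\eps$ of $\Phi$ that coincide with $\Phi$ on a large interval and are cut off near $0$ and near $+\infty$. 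The error $\int_0^{+\infty}(\psi_\eps'+\tfrac12\rho_N\psi_\eps)^2 w\,dr$ stays bounded, being supported only in the two cut-off regions, whereas $\int_0^{+\infty}\rho_N^2\psi_\eps^2 w\,dr\to+\infty$ because $\rho_N^2\Phi^2 w=\rho_N$ (by $\rho_N w=1/g$) and $\int_0^{+\infty}\rho_N\,dr=[-\log g]$ diverges at both endpoints. Hence the quotient of the two sides tends to $\frac14$.

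The inequality itself is immediate once \eqref{eqrho} is read as $(\rho_N w)'=\rho_N^2 w$; the only genuinely delicate point is the sharpness, where one must construct the truncated ground states $\psi_\eps$ explicitly and estimate the cut-off error carefully, so that it is seen to be negligible compared with the diverging weighted $L^2$-norm.
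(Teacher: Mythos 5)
Your proof is correct and follows essentially the same route as the paper: the inequality comes from expanding $\int_0^{+\infty}\bigl(\psi'+\tfrac12\rho_N\psi\bigr)^2 w\,dr\ge 0$ with $w(r)=r^{N-1}e^{r^2/2}$ and invoking the Riccati equation \eqref{eqrho} (your reformulation $(\rho_N w)'=\rho_N^2 w$, i.e. $\rho_N w=1/g$, is exactly that equation), and sharpness is tested on truncations of the ground state $g^{1/2}$, $g(r)=\int_r^{+\infty}t^{1-N}e^{-t^2/2}dt$, just as with the paper's $\psi_k$. The only (welcome) refinement is that you cut off at both endpoints and track the bounded cut-off error against the divergent integral $\int\rho_N\,dr$, which makes the competitors genuinely admissible; the paper's $\psi_k$ are truncated only near the origin.
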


\begin{proof}
Since $\left(\psi'(r)+\frac 1 2\rho_N(r)\psi(r)\right)^2 \ge 0$, integrating by parts we get
$$
\int_0^{+\infty}(\psi'(r))^2 r^{N-1}e^{r^2/2}dr \ge \frac 1 2 \int_0^{+\infty} \psi(r)^2 r^{N-1}e^{r^2/2} \left[\rho_N'(r)+\frac{N-1}{r}\rho_N(r)+r\rho_N(r)-\frac 1 2 \rho_N(r)^2 \right]dr
$$
and from \eqref{eqrho} we immediately deduce the claim.

In order to prove that the constant $\frac 1 4$ is sharp it suffices to consider the following sequence of functions
$$
\psi_k(r)=\left\{\begin{array}{ll}
\left(\dint_{1/k}^{+\infty}t^{1-N}e^{-t^2/2}dt\right)^{1/2} & r \in \left(0,\frac 1 k\right)
\\
\left(\dint_r^{+\infty}t^{1-N}e^{-t^2/2}dt\right)^{1/2} & r \in \left(\frac 1 k,+\infty\right)
\end{array}
\right.
$$
and verify that
$$
\lim_{k \to +\infty}\frac{\int_0^{+\infty}(\psi_k'(r))^2 r^{N-1}e^{r^2/2}dr}{\int_0^{+\infty}\psi_k(r)^2 \rho_N(r)^2r^{N-1}e^{r^2/2}dr}=\frac 1 4.
$$
\end{proof}

The following Hardy inequality holds true.
\begin{theorem}\label{hardy}
For every $u \in H^1(\R^N; m_N)$ it holds
\begin{equation}\label{hardy1}
\int_{\R^N} |\nabla u|^2 dm_N \ge \frac 1 4 \int_{\R^N} u^2 \rho_{N,T}(|x|)^2 dm_N,
\end{equation}
where 
$$
\rho_{N,T}(r)=\left\{\begin{array}{ll} 
 \rho_N(r) & 0<r<T 
 \\ 
 \rho_N(T) & r \ge T. 
 \end{array}    \right.
$$
Moreover $\frac 1 4 $ is sharp.
\end{theorem}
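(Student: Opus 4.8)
The plan is to deduce the $N$-dimensional inequality \eqref{hardy1} from the one-dimensional statement of Lemma \ref{unidim} by $m_N$-symmetrization, the decisive feature being that the \emph{truncated} weight $\rho_{N,T}$ is nonincreasing in $r$. Indeed, $\rho_N$ decreases on $(0,T)$ down to its unique minimum at $T$ and has been frozen at the value $\rho_N(T)$ for $r\ge T$, so $r\mapsto\rho_{N,T}(r)$ is nonincreasing and, at the same time, $\rho_{N,T}(r)\le\rho_N(r)$ for every $r>0$. These are exactly the two properties the argument will exploit: monotonicity to run the symmetrization in the right direction, and the pointwise bound to carry over the sharp one-dimensional constant. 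This is the reason the truncation at $T$ is introduced, and it is the conceptual crux of the proof.

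First I would reduce to radial functions. Replacing $u$ by $|u|$ changes neither side (since $|\nabla |u||=|\nabla u|$ a.e. and $u^2=|u|^2$), so I may assume $u\ge0$. By the P\'olya--Sz\"ego principle \eqref{polya} the left-hand side does not increase under symmetrization,
\begin{equation*}
\int_{\R^N}|\nabla u|^2\,dm_N\ge\int_{\R^N}|\nabla u^\bigstar|^2\,dm_N .
\end{equation*}
For the right-hand side I would apply the Hardy--Littlewood inequality \eqref{hl} to the pair $u^2$ and $\rho_{N,T}(|x|)^2$; since $\rho_{N,T}(|x|)^2$ is radially symmetric and nonincreasing it equals its own $m_N$-symmetrization, while $(u^2)^\bigstar=(u^\bigstar)^2$, so that
\begin{equation*}
\int_{\R^N}u^2\,\rho_{N,T}(|x|)^2\,dm_N\le\int_{\R^N}(u^\bigstar)^2\,\rho_{N,T}(|x|)^2\,dm_N .
\end{equation*}
Combining the two displays reduces the claim to proving \eqref{hardy1} for the radial, radially nonincreasing function $u^\bigstar$.

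For a radial function with profile $\psi$, both sides reduce, up to the common constant $N\omega_N$, to the one-dimensional integrals of Lemma \ref{unidim} against the weight $r^{N-1}e^{r^2/2}$. Applying that lemma and then the pointwise bound $\rho_{N,T}\le\rho_N$ yields
\begin{equation*}
\int_0^{+\infty}(\psi')^2 r^{N-1}e^{r^2/2}\,dr\ge\frac14\int_0^{+\infty}\psi^2\rho_N^2\,r^{N-1}e^{r^2/2}\,dr\ge\frac14\int_0^{+\infty}\psi^2\rho_{N,T}^2\,r^{N-1}e^{r^2/2}\,dr ,
\end{equation*}
which is precisely \eqref{hardy1} for $u^\bigstar$. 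Since Lemma \ref{unidim} is stated for $\psi\in C_0^\infty(0,+\infty)$, this step needs an approximation: I would first prove \eqref{hardy1} for radial $u$ whose profile is smooth and compactly supported in $(0,+\infty)$, and then pass to a general radial $u\in H^1(\R^N;m_N)$ by truncating near $0$ and near $+\infty$ and mollifying, checking that both integrals converge under the limit. I expect this density step to be the main technical obstacle, since one must control the profile near the singularity of the weight at the origin.

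Finally, for sharpness I would use that $\rho_{N,T}=\rho_N$ on $(0,T)$. The extremal behaviour in Lemma \ref{unidim} is produced by the singularity of $\rho_N$ at the origin (recall $\rho_N(r)\sim (N-2)/r$ for $N\ge3$ and $r\rho_N(r)\log(1/r)\to1$ for $N=2$), so I would take a sequence of radial test functions supported in the ball of radius $T$ and concentrating at the origin. On their support $\rho_{N,T}$ agrees with $\rho_N$, hence the associated quotients coincide with those of Lemma \ref{unidim} and tend to $\tfrac14$; this shows the constant cannot be improved and, in particular, that truncating the weight at $T$ does not spoil optimality.
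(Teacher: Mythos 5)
Your argument is correct and follows essentially the same route as the paper: reduce to $u=u^\bigstar$ via the P\'olya--Sz\"ego principle \eqref{polya} and the Hardy--Littlewood inequality \eqref{hl}, pass to the one-dimensional integrals in the radial variable, and conclude from Lemma \ref{unidim} together with the pointwise bound $\rho_{N,T}\le\rho_N$. You merely make explicit some points the paper leaves implicit (the monotonicity of $\rho_{N,T}$ needed for the Hardy--Littlewood step, the density argument, and the concentration-at-the-origin mechanism behind sharpness), all of which are consistent with the paper's proof.
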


\begin{proof}[Proof of Theorem \ref{hardy}]
Let $u \in H^1(\R^N;m_N)$. Taking into account \eqref{polya} and \eqref{hl} it is enough to prove the claim when $u=u^\bigstar$. In this case
$$
\int_{\R^N}|\nabla u|^2 dm_N =N\omega_N\int_0^{+\infty} \left(u^\ast(H(r))'\right)^2r^{N-1}e^{r^2/2}dr
$$
and
$$
\int_{\R^N} u^2 \rho_{N,T}^2 dm_N=N\omega_N\int_0^{+\infty} u^\ast(H(r))^2 \rho_{N,T}(r)^2r^{N-1}e^{r^2/2}dr.
$$
We get \eqref{hardy1} by applying Lemma \ref{unidim} and $0 <\rho_{N,T}(r) \le \rho_N(r),\> r>0.$ 
\end{proof}

Let $N \ge 3$ and  $u \in H^1(\R^N; m_N)$. P\'olya-Sz\"ego principle \eqref{polya} together with Hardy inequality \eqref{hardy1} yield
\begin{eqnarray*}
 \int_{\R^N}|\nabla u|^2 dm_N \ge \int_{\R^N}|\nabla u^\bigstar|^2 dm_N &\ge& \frac 1 4  \int_{\R^N} \left(u^\bigstar\right)^2 \rho_{N,T}(|x|)^2dm_N
 \\
 &=& \frac 1 4  \int_0^{+\infty}u^\ast(t)^2 \rho_{N,T}(H^{-1}(t))^2 dt.
\end{eqnarray*}
Observing that $2/2^\ast-1=-2/N$ and
$$
\lim_{t \to 0^+} \frac{H^{-1}(t)}{t^{1/N}}=\omega_N^{-1/N}
$$
we get
$$
\int_{\R^N} |\nabla u|^2dm_N \ge C \int_0^{+\infty} u^\ast(t)^2t^{2/2^\ast-1}dt,
$$
that is $H^1(\R^N;m_N)$ is continuously embedded in the Lorentz space $L^{2^\ast,2}(\R^N;m_N)$ (see for instance \cite{Hu,KK} for the definition). On the other hand, when $N=2$ we obtain
$$
\int_{\R^N} |\nabla u|^2dm_N \ge C \int_0^{+\infty} \frac{u^\ast(t)^2}{t\max\{1,\log (1/t)\}^2}dt.
$$
By \cite{OP} (see Theorem 4.2 and Theorem 8.8) we deduce that  there exist $\gamma_0,\gamma_\infty>0$ such that
$$
\int_{0^+}\exp\left(\gamma_0 u^\ast(t)^2\right)dt<+\infty, \qquad \int^{+\infty}\exp\left(-\gamma_\infty u^\ast(t)^{-2}\right)dt<+\infty.
$$

\section{A reverse H\"older inequality}\label{sec3}

Let $u_{j}$ be an eigenfunction corresponding to the eigenvalue $\lambda _{j}
$ of the problem under consideration, i.e. 
\begin{equation}\label{P}
\left\{ 
\begin{array}{ll}
-\mathrm{div}\left( e^{|x|^{2}/2}\nabla u_{j}\right) =\lambda _{j}(\Omega
)e^{|x|^{2}/2}u_{j} & \mbox{in}\>\Omega \\ 
u_{j}=0 & \mbox{on}\>\partial \Omega,
\end{array}
\right. 
\end{equation}
where $\Omega$ is an open subset of $\mathbb{R}^{N}$ ($N \ge 2$) with finite $m
_{N}$-measure. The main result of this section is a sharp reverse H\"{o}lder
inequality for $u_{j}$. In the case of the Dirichlet-Laplacian, this kind of estimates has been  proved in \cite{PR,C}. 

The first step in our arguments consists into introducing  a ball $B_{\tilde r}$ such that $\lambda_j(\Omega)=\lambda_j(B_{\tilde r})$. Since the explicit value of $\lambda_j(B_{\tilde r})$ is not known, we estimate it from above and below in terms of $\tilde r$. To this aim observe that,  if $u_j$ is a solution to \eqref{P}, the function $v_j=u_je^{|x|^2/4}$ satisfies the following Dirichlet problem for the harmonic oscillator
\begin{equation}  \label{problem2}
\left\{ 
\begin{array}{ll}
-\Delta v_j +\frac{1}{4}|x|^2 v_j=\nu_j(\Omega) v_j & \mbox{in}\> \Omega \\ 
v_j=0 & \mbox{on}\> \partial\Omega
\end{array}
\right.
\end{equation}
with $\nu_j(\Omega) = \lambda_j(\Omega) -\frac{N}{2}$.

\noindent When $\Omega=\R^N$ the spectrum and the eigenfunctions of \eqref{problem2} are explicitly known (see for instance \cite{EK}). In particular the spectrum is given by $\{\nu_j(\R^N)=N+j-1:\>j=1,2,...\}$. When $j=1$, $\nu_1(\R^N)=N$ is simple and a corresponding eigenfunction is $e^{-|x|^2/2}$. 

\noindent When $\Omega \subset \R^N$, since the eigenvalues of \eqref{problem2} are decreasing with respect to inclusion of sets, we get 
\begin{equation}\label{RN}
\nu_j(\Omega) \ge \nu_1(\R^N)  \Longleftrightarrow \lambda_j(\Omega) \ge \frac{3}{2}N.
\end{equation}
Moreover, using $v_j$ as test function in \eqref{problem2} we get
\begin{equation*}
\lambda_j(\Omega)-\frac{N}{2}\ge \frac{\int_\Omega |\nabla v_j|^2 dx}{\int_\Omega v_j^2 dx}\ge \min\left\{\frac{\int_\Omega |\nabla \varphi|^2 dx}{\int_\Omega \varphi^2 dx}: \> \varphi \in H_0^1(\Omega)\setminus\{0\}\right\}=\lambda_1^{-\Delta}(\Omega)
\end{equation*}
where $\lambda_1^{-\Delta}(\Omega)$ is the first eigenvalue of the Dirichlet-Laplacian in $\Omega$. Whenever $\Omega$ has finite $m_N$-measure, by the well-known Faber-Krahn inequality, if $B_R$ is a ball with the same Lebesgue measure as $\Omega$, it holds
$$
\lambda_1^{-\Delta}(\Omega) \ge \lambda_1^{-\Delta}(B_R)=\frac{j_{N/2-1,1}^2}{R^2}
$$
being $j_{N/2-1,1}$ the first zero of the Bessel function of the first kind of order $\frac{N}{2}-1$. Thus
\begin{equation}\label{delta}
\lambda_j(\Omega)\ge \frac{N}{2}+\frac{j_{N/2-1,1}^2}{R^2}.
\end{equation}
Incidentally we note that 
$$
\lambda_1(\Omega) \le \frac{N}{2}+\lambda_1^{-\Delta}(\Omega)+\frac{R_\Omega^2}{4},
$$
where $R_\Omega$ is the radius of the smallest ball centered at the origin and containing $\Omega$.

Now, let us come back to problem \eqref{P}. From now on we will suppose that $\Omega$ has finite $m_N$-measure. By integrating the equation in \eqref{P} on the superlevel sets of $u_j$, using isoperimetric inequality \eqref{II}, co-area formula and H\"older inequality, according to a technique introduced by Talenti in \cite{Ta1}, 
in \cite{BMP} it is proved that 
\begin{equation*}
\left\{ 
\begin{array}{ll}
-U_{j}^{\prime \prime }(s)\leq \lambda _{j}(\Omega )I^{-2}(s)U_{j}(s) & 
\mbox{in}\>(0,m_{N}(\Omega)) \\ 
&  \\ 
U_{j}(0)=U_{j}^{\prime }(m_{N}(\Omega))=0, & 
\end{array}
\right. 
\end{equation*}
where
\begin{equation*}
U_{j}(s)=\int_{0}^{s}u_{j}^{\ast }(t)dt 
\end{equation*}
and 
$$I(s)=\inf\{P_{m_N}(E): \> E \>\mbox{smooth}, \> m_N(E)=s  \}=h\left(H^{-1}(s)\right)$$
is the isoperimetric function associated to the measure $m_N$.

\noindent For any fixed $L>0$ we consider the following Sturm-Liouville problem%
\begin{equation}
\left\{ 
\begin{array}{ll}
-\varphi^{\prime \prime }(s)=\sigma I^{-2}(s)\varphi(s) & \mbox{in}\>(0,L) \\ 
&  \\ 
\varphi(0)=\varphi^{\prime }(L)=0. & 
\end{array}
\right.   \label{SL}
\end{equation}
Since
\begin{equation*}
\lim_{s\rightarrow 0^{+}}\frac{I(s)}{s^{1-\frac{1}{N}}}=N\omega _{N}^{\frac{1%
}{N}}>0,
\end{equation*}
the Sobolev space $\{\varphi \in H^{1}(0,L):\varphi (0)=0\}$ is compactly
embedded into the weighted Lebesgue space 
\begin{equation*}
L^{2}\left((0,L);I^{-2}\right)=\left\{ \varphi :\left( 0,L\right) \rightarrow \mathbb{R}
:\>\left\Vert \varphi \right\Vert _{L^{2}\left((0,L);I^{-2}\right)}^{2}\equiv
\int_{0}^{L}(\varphi (t))^{2}I^{-2}(t)dt<+\infty \right\} 
\end{equation*}
(see \cite{N, BCT2}). Therefore spectral theory on selfadjoint compact operators
ensures that the first eigenvalue $\sigma _{1}(0,L)$ of (\ref{SL}) is simple
and it can be found as the minimum of the Rayleigh quotient 
\begin{equation*}
\min_{{\tiny 
\begin{array}{l}
\varphi \in H^{1}(0,L), \\ 
\varphi (0)=0,\>\varphi \not\equiv 0
\end{array}%
}}\frac{\displaystyle\int_{0}^{L}\left( \varphi ^{\prime }(t)\right) ^{2}dt}{
\displaystyle\int_{0}^{L}\left( \varphi (t)\right) ^{2}I^{-2}(t)dt}.
\end{equation*}

\bigskip

\noindent Now we claim that there exists a value of $L,$ say $\widetilde{L}$ , such
that 
\begin{equation*}
\sigma _{1}(0,\widetilde{L})=\lambda _{j}(\Omega).  \label{L}
\end{equation*}
To this aim consider the problem
\begin{equation}
\left\{ 
\begin{array}{ll}
-\mathrm{div}\left( e^{|x|^{2}/2}\nabla z\right) =\lambda e^{|x|^{2}/2}z & 
\mbox{in}\>B_{r} \\ 
z=0 & \mbox{on}\>\partial B_{r},
\end{array}
\right.   \label{Radial_P}
\end{equation}
where $B_{r}$ denotes the ball centered at the origin having radius $r$. By Theorem \ref{compact} the first eigenvalue $\lambda _{1}(B_{r})$ of (\ref
{Radial_P}) fulfills
\begin{equation*}
\lambda _{1}(B_{r})=\min \left\{ \frac{\int_{B_{r}}|\nabla \psi |^{2}dm_N}{\int_{B_{r}}\psi ^{2}dm_N}:\>\psi \in
H_{0}^{1}(B_{r};m_N)\backslash \left\{ 0\right\} \right\} .
\end{equation*}
By \eqref{polya} we know that such a minimum is achieved on a function $z$
such that%
\begin{equation*}
z(x)=z^{\bigstar }(x).
\end{equation*}
At this point it easy to verify  that $\lambda _{1}(B_{r})$ is a continuous
and strictly decreasing function \ with respect to $r.$
Moreover an easy consequence of \eqref{RN}, \eqref{delta} is 
\begin{equation*}
\lim_{r\rightarrow 0^{+}}\lambda _{1}(B_{r})=+\infty \text{ \ and  }
\lim_{r\rightarrow +\infty }\lambda _{1}(B_{r})=\frac{3}{2}N.
\end{equation*}
Therefore there exists a unique value of $r,$ say $\widetilde{r},$ such that 
\begin{equation*}
\lambda _{1}(B_{\widetilde{r}})=\lambda _{j}(\Omega).
\end{equation*}%
Let us denote with $\widetilde{z}$ an eigenfunction corresponding to $
\lambda _{1}(B_{\widetilde{r}})$; it is easy to verify that the function
\begin{equation*}
Z(s)=\int_{0}^{s}\widetilde{z}^{\ast }(t)dt
\end{equation*}
satisfies \eqref{SL} with $\sigma _{1}(0,\widetilde{L})=\lambda_1(B_{\widetilde r})=\lambda _{j}(\Omega).$
Note that if $\widetilde L=m_N(\Omega)$ the results we are going to state
become trivial since in this case $U_{j}$ and $Z$ are proportional. So from
now on we will assume that $\widetilde L<m_N(\Omega)$ and we will define the
function $Z(s)$ on the whole interval $(0,m_N(\Omega))$ by setting
its value constantly equal to $Z(\widetilde L)$  on $(\widetilde L,m_N(\Omega))$.

The following comparison result holds true. We omit the proof, since it can be obtained following, for instance, the lines of \cite{AFT,BCT}.

\begin{proposition}
If $u_j$ and $\widetilde z$ are defined as above and 
\begin{equation*}
\int_{0}^{m_{N}(\Omega)}\left( u_{j}^{\ast }(t)\right)
^{q}dt=\int_{0}^{\widetilde L}\left( \widetilde z^{\ast }(t)\right) ^{q}dt\qquad \text{ with }q>0
\end{equation*}
then
\begin{equation*}
\int_{0}^{s}\left( u_{j}^{\ast }(t)\right) ^{q}dt\leq \int_{0}^{s}\left(
\widetilde z^{\ast }(t)\right) ^{q}dt, \qquad s \in (0,\widetilde L).
\end{equation*}
\end{proposition}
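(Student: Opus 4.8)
The plan is to reduce the statement to a single–crossing (mass–comparison) lemma for the rearrangements and to extract the required crossing from two Wronskian-type monotonicities. Recall that $U_j$ satisfies the differential inequality $-U_j''\le \lambda_j(\Omega)I^{-2}U_j$ with $U_j(0)=0$, while $Z$ solves the corresponding equality $-Z''=\lambda_j(\Omega)I^{-2}Z$ with $Z(0)=0$, the two sharing the same coefficient because $\sigma_1(0,\widetilde L)=\lambda_j(\Omega)$; moreover $\widetilde z^*=Z'$ vanishes on $(\widetilde L,m_N(\Omega))$, so there $u_j^*-\widetilde z^*=u_j^*\ge 0$. The endgame I would use is the elementary fact that if $f,g$ are nonincreasing on $(0,m_N(\Omega))$ with $\int_0^{m_N(\Omega)}f=\int_0^{m_N(\Omega)}g$ and $f-g$ changes sign at most once, from negative to positive, then $\int_0^s f\le\int_0^s g$ for every $s$. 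Applying this to $f=(u_j^*)^q$ and $g=(\widetilde z^*)^q$, which are nonincreasing, have equal total integral by hypothesis, and whose difference has the same sign as $u_j^*-\widetilde z^*$ since $t\mapsto t^q$ is increasing, will give the claim.

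The heart of the argument is therefore to prove that $u_j^*-\widetilde z^*$ changes sign at most once on $(0,\widetilde L)$, from negative to positive, equivalently that the ratio $u_j^*/\widetilde z^*=U_j'/Z'$ is nondecreasing there. First I would introduce the Wronskian $W=U_j'Z-U_jZ'$ and compute $W'=U_j''Z-U_jZ''$; multiplying $-U_j''\le\lambda_j(\Omega)I^{-2}U_j$ by $Z\ge 0$ and using $-Z''=\lambda_j(\Omega)I^{-2}Z$ gives $W'\ge 0$, and since $W(0)=0$ we get $W\ge 0$ on $(0,\widetilde L)$. Next, since $Z'=\widetilde z^*>0$ on $(0,\widetilde L)$, the monotonicity of $U_j'/Z'$ is governed by the sign of $\Psi:=U_j''Z'-U_j'Z''$; multiplying the inequality for $U_j''$ by $Z'\ge 0$ and again invoking the equation for $Z''$ yields $\Psi\ge\lambda_j(\Omega)I^{-2}(U_j'Z-U_jZ')=\lambda_j(\Omega)I^{-2}W\ge 0$. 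Hence $(U_j'/Z')'\ge 0$, so $u_j^*/\widetilde z^*$ is nondecreasing and crosses the level $1$ at most once, from below; together with $u_j^*-\widetilde z^*\ge 0$ on $(\widetilde L,m_N(\Omega))$ this furnishes exactly the single crossing needed in the first step, and the mass–comparison lemma closes the proof.

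The hard part will be the regularity bookkeeping rather than the algebra. The function $U_j$ is merely concave with $u_j^*=U_j'$ nonincreasing, so it satisfies its differential inequality only in the distributional sense and $U_j''$ is a signed measure; consequently the two Wronskian computations, and in particular the pointwise monotonicity of $U_j'/Z'$, must be justified for the absolutely continuous representatives, for instance by mollifying $U_j$ or by working with the once-integrated forms of the inequalities. One must also control the endpoint behaviour: at $s=0$ the rearrangement $u_j^*$ may be unbounded when $N\ge 3$, while as $s\to\widetilde L$ one has $Z'=\widetilde z^*\to 0$, so the ratio $U_j'/Z'$ has to be treated as a limit there. This is precisely the technical content carried out in \cite{AFT,BCT}, which is why the proof is omitted here.
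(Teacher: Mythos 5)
Your argument is correct, but it is not the route the paper has in mind: the paper omits the proof of this proposition altogether and defers to \cite{AFT,BCT}, where this Chiti-type comparison is obtained by a contradiction argument --- one assumes the crossing structure fails, glues pieces of $U_j$ and $Z$ into an admissible test function for the Rayleigh quotient defining $\sigma_1(0,\widetilde L)$, and contradicts the minimality (and simplicity) of that first Sturm--Liouville eigenvalue; no pointwise monotonicity of $u_j^\ast/\widetilde z^\ast$ is established there. Your two Wronskian computations do yield the stronger statement that $U_j'/Z'$ is nondecreasing on $(0,\widetilde L)$: indeed $W'=U_j''Z-U_jZ''\ge 0$ follows from $U_j''\ge -\lambda_j(\Omega)I^{-2}U_j$, $Z\ge 0$ and the equation for $Z$, hence $W\ge 0$ since $W(0^+)=0$; then $U_j''Z'-U_j'Z''\ge \lambda_j(\Omega)I^{-2}W\ge 0$ and one divides by $(Z')^2>0$ on $(0,\widetilde L)$. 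Together with $\widetilde z^\ast\equiv 0$ past $\widetilde L$ and the equal-mass hypothesis, the single-crossing lemma closes the proof exactly as you describe (note that lemma needs only the single sign change of $(u_j^\ast)^q-(\widetilde z^\ast)^q$, not the monotonicity of the two functions). What your approach buys is a self-contained, quantitative proof that bypasses the variational characterization of $\sigma_1$ entirely; and the regularity issue you flag is in fact benign rather than the ``hard part'': since $u_j^\ast=U_j'$ is nonincreasing, $U_j''$ is a nonpositive measure, while the differential inequality bounds it from below by the locally integrable function $-\lambda_j(\Omega)I^{-2}U_j$; a measure squeezed between an $L^1_{\mathrm{loc}}$ function and $0$ has no singular part, so $U_j''\in L^1_{\mathrm{loc}}$, both $U_j'$ and $Z'$ are locally absolutely continuous, and every identity above holds almost everywhere without mollification. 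The one point that genuinely deserves a line is $W(0^+)=0$, which requires $u_j^\ast(0^+)=\Vert u_j\Vert_\infty<+\infty$; this is true but not free (for $N\ge 2$ the function $I^{-2}$ alone is not integrable at $0$, so one needs a short bootstrap on $-U_j''\le \lambda_j(\Omega)I^{-2}U_j$, or local elliptic regularity) and should be stated rather than assumed.
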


The above result immediately implies the following reverse H\"{o}lder inequality.
\begin{theorem}\label{ReverseHolder} 
For any $0<r<q\le \infty$, $u_j$ satisfies the inequality
\begin{equation}\label{chiti}
\left( \int_{\Omega}\left\vert u_{j}\right\vert ^{q}dm_{N}\right)
^{1/q}\leq C(N,r,q,\lambda_j(\Omega))\left( \int_{\Omega}\left\vert u_{j}\right\vert ^{r}dm_{N}\right) ^{1/r},
\end{equation}
where
$$
C(N,r,q,\lambda_j(\Omega))=\frac{\left( \dint_0^{\widetilde L}\widetilde z^\ast(t)^{q}dt\right)
^{1/q}}{\left( \dint_0^{\widetilde L} \widetilde z^\ast(t)^{r}dt\right)^{1/r}}
$$
with $\widetilde z$ defined as above.
\end{theorem}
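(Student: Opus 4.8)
The strategy is to move everything onto the half-line through the decreasing rearrangement and then to use the preceding comparison Proposition as the only source of quantitative information. By equimeasurability with respect to $m_N$ one has $\|u_j\|_{L^p(\Omega;m_N)}^p=\int_0^{m_N(\Omega)}(u_j^\ast(t))^p\,dt$ for every $p>0$, and likewise $\|\widetilde z\|_{L^p(B_{\widetilde r};m_N)}^p=\int_0^{\widetilde L}(\widetilde z^\ast(t))^p\,dt$, where $\widetilde L=m_N(B_{\widetilde r})$ and $\widetilde z^\ast\equiv 0$ on $(\widetilde L,m_N(\Omega))$. Since both sides of \eqref{chiti} are positively homogeneous of degree one in $u_j$, while the constant $C(N,r,q,\lambda_j(\Omega))$ is unchanged when $\widetilde z$ is rescaled, I would first multiply $u_j$ by a constant so that the $L^q$ masses agree:
\[\int_0^{m_N(\Omega)}(u_j^\ast(t))^q\,dt=\int_0^{\widetilde L}(\widetilde z^\ast(t))^q\,dt.\]

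With this normalization the hypothesis of the preceding Proposition (taken with exponent $q$) is satisfied, and it yields $\int_0^s(u_j^\ast)^q\,dt\le\int_0^s(\widetilde z^\ast)^q\,dt$ for every $s\in(0,\widetilde L)$. I would then extend this partial-integral domination to all of $(0,m_N(\Omega))$: for $s\ge\widetilde L$ the right-hand side is frozen at the common total mass, whereas the left-hand side is nondecreasing in $s$ and bounded above by that same total mass; the inequality therefore persists for every $s\in(0,m_N(\Omega))$, with equality only at the endpoint. In other words $(\widetilde z^\ast)^q$, extended by zero, majorizes $(u_j^\ast)^q$ in the Hardy--Littlewood--P\'olya sense: the two functions are nonincreasing, have equal integral, and satisfy $\int_0^s(u_j^\ast)^q\le\int_0^s(\widetilde z^\ast)^q$ for all $s$.

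The concluding step is to turn this information at the exponent $q$ into the desired information at the smaller exponent $r$. Applying the Hardy--Littlewood--P\'olya majorization inequality with the concave function $t\mapsto t^{r/q}$ (concave precisely because $0<r<q$) reverses the ordering under the integral sign and gives $\int_0^{m_N(\Omega)}(u_j^\ast)^r\,dt\ge\int_0^{\widetilde L}(\widetilde z^\ast)^r\,dt$, that is $\|u_j\|_{L^r(\Omega;m_N)}\ge\|\widetilde z\|_{L^r(B_{\widetilde r};m_N)}$. Combined with the equality of the $L^q$ masses fixed by the normalization, this produces $\|u_j\|_{L^q(\Omega;m_N)}/\|u_j\|_{L^r(\Omega;m_N)}\le\|\widetilde z\|_{L^q}/\|\widetilde z\|_{L^r}=C(N,r,q,\lambda_j(\Omega))$, and undoing the rescaling establishes \eqref{chiti} for finite $q$. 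The case $q=\infty$ follows by letting $q\to+\infty$, since $\|u_j\|_{L^q(\Omega;m_N)}\to\|u_j\|_{L^\infty(\Omega;m_N)}$ and the constant converges to its value at $q=\infty$.

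I expect the delicate point to be exactly this transfer of exponent, i.e. the rigorous use of the concave Hardy--Littlewood--P\'olya inequality, which is what converts the $q$-level majorization into the reverse bound for the $r$-norms. If one prefers a self-contained argument, the same conclusion can be obtained by writing $(\widetilde z^\ast)^r-(u_j^\ast)^r=w(s)\,\big[(\widetilde z^\ast)^q-(u_j^\ast)^q\big]$, where $w(s)$ is the chord slope of $t\mapsto t^{r/q}$ between $(u_j^\ast)^q$ and $(\widetilde z^\ast)^q$: granting that $u_j^\ast$ and $\widetilde z^\ast$ cross exactly once, at some $s_0$, the concavity makes $w$ lie below the constant $c_0=\tfrac{r}{q}(u_j^\ast(s_0))^{\,r-q}$ on $(0,s_0)$, where $(\widetilde z^\ast)^q-(u_j^\ast)^q\ge0$, and above $c_0$ on $(s_0,m_N(\Omega))$, so that integrating against the zero-mean function $(\widetilde z^\ast)^q-(u_j^\ast)^q$ yields $\int_0^{m_N(\Omega)}\big[(\widetilde z^\ast)^r-(u_j^\ast)^r\big]\,dt\le0$. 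Establishing the single-crossing property is the one nontrivial input of this elementary route, and it is where the Sturm--Liouville structure of \eqref{SL} would enter; the majorization argument above sidesteps it entirely, so that is the route I would carry out.
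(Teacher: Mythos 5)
Your argument is correct and follows the paper's (implicit) route: the paper presents the theorem as an immediate consequence of the preceding comparison Proposition, and your normalization at the exponent $q$ followed by the Hardy--Littlewood--P\'olya majorization inequality with the concave map $t\mapsto t^{r/q}$ (after extending the partial-integral domination from $(0,\widetilde L)$ to all of $(0,m_N(\Omega))$, which you do correctly) is a valid way of making that deduction explicit; the more traditional variant normalizes at the lower exponent $r$ and applies the convex map $t\mapsto t^{q/r}$, but the two are interchangeable. The ``self-contained'' single-crossing alternative you sketch at the end is the only place where an unproved claim appears, and since you explicitly discard it in favor of the majorization route, there is no gap.
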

\begin{remark}\label{rem-qo1} \rm 
The previous inequality is stated for any open set $\Omega$ with finite $m_N$-measure. Actually, it also holds
for any quasi-open set $\Omega$ with finite $m_N$-measure. To see that, we can use the definition of a quasi-open set
(see for example \cite[chapter 3]{HP}) to approach $\Omega$ by a sequence of open sets $\Omega_\varepsilon$ such that $\Omega\subset \Omega_\varepsilon$ and the capacity of the
difference $\Omega_\varepsilon\setminus\Omega$ is less than $\varepsilon$. Then $\Omega_\varepsilon$ $\gamma$-converges to $\Omega$ and therefore the eigenfunctions and the eigenvalues of $\Omega_\varepsilon$ converge to the corresponding eigenfunctions and eigenvalues of $\Omega$ allowing to pass to the limit in (\ref{chiti}).
\end{remark}

\section{Proof of Theorem \ref{existence}}

Let $\Omega_n$ be a minimizing sequence, and let $w_n$ be the solution to
\begin{equation}  \label{pb1}
\left\{ 
\begin{array}{ll}
-\Delta w_n - x\cdot \nabla w_n=1 & \mbox{in}\> \Omega_n \\ 
w_n=0 & \mbox{on}\> \partial\Omega_n.
\end{array}
\right.
\end{equation}
Choosing $w_n$ as test function in \eqref{pb1} and using Poincar\'e
inequality \eqref{poincare} we get that the sequence $w_n$ is bounded in $H^1(\mathbb{R}^N;
m_N)$. The compact embedding of $H^1(\mathbb{R}^N; m_N)$
in $L^2(\mathbb{R}^N; m_N)$  ensures the existence of
a subsequence, still denoted by $w_n$, and the existence of a function $w
\in H^1(\mathbb{R}^N; m_N)$ such that 
\begin{eqnarray}
& w_n \rightharpoonup w \quad &  \mbox{in } \quad H^1(\mathbb{R}^N; m_N) 
\notag \\
& w_n \rightarrow w &  \mbox{in} \quad L^2(\mathbb{R}^N; m_N) \notag \\
& w_n \rightarrow w &  \mbox{a.e. in } \Omega. 
\notag
\end{eqnarray}
Let us consider the quasi-open set 
\begin{equation}\label{ast}
\hat\Omega = \{x \in \mathbb{R}^N : w(x) > 0\}.
\end{equation}
Since $w_n$ converges a.e. to $w$, we have for a.e. $x \in \mathbb{R}^N$ 
\begin{equation*}
\chi_{\hat\Omega}(x) \leq \liminf_n \chi_{\Omega_n}(x);
\end{equation*}
therefore Fatou's Lemma gives 
\begin{equation*}  \label{lim1}
m_N(\hat\Omega)=\int_{\mathbb{R}^N} \chi_{\hat\Omega}(x) dm_N \leq \liminf_n \int_{\mathbb{R}^N} \chi_{\Omega_n}(x) dm_N \leq c.
\end{equation*}
We want to prove that 
$$\lambda_k(\hat\Omega)=\min \{ \lambda_k(\Omega):\> m_N(\Omega) \leq c\}.$$
Let $u_n^j$ be an eigenfunction corresponding to $\lambda_j(\Omega_n)$, $1 \le j\le k$, normalized as follows
\begin{equation}\label{uno}
\int_{\Omega_n} (u_n^j)^2dm_N=1.
\end{equation}
By Theorem \ref{compact} there exist $k$ function $u^j$ such that
\begin{eqnarray}
& u_n^j \rightharpoonup u^j \quad &  \mbox{in } \quad H^1(\mathbb{R}^N; m_N) 
\notag \\
& u_n^j \rightarrow u^j &  \mbox{in} \quad L^2(\mathbb{R}^N; m_N). \notag 
\end{eqnarray} 
First of all we observe that $u^j \in H_0^1(\hat\Omega;m_N)$ (see Proposition \ref{conv1} below). 
 
\noindent Let us consider the set $V = \mathrm{Span} [u^1, u^2,\cdots, u^k]$
which is a vector subspace of  $H_0^1(\Omega; m_N)$ with dimension $k$.

\noindent Let $v = \sum_{j=1}^{k} \alpha_j u^j \in V$ and $v_n = \sum_{j=1}^{k}
\alpha_j u_n^j \in H_0^1(\Omega_n; m_N)$; then we have
\begin{eqnarray}
v_n \rightharpoonup v & \quad \mbox{in } \quad H^1(\mathbb{R}^N; m_N) 
\notag \\
v_n \rightarrow v & \quad \mbox{in} \quad L^2(\mathbb{R}^N; m_N). 
\notag
\end{eqnarray}
Thus
\begin{equation}  \label{lim2}
\dfrac{\displaystyle\int_{\mathbb{R}^N} |\nabla v|^2 dm_N}{\displaystyle%
\int_{\mathbb{R}^N} v^2 dm_N} \leq \liminf_n\dfrac{\displaystyle\int_{\mathbb{R}
^N} |\nabla v_n|^2 dm_N}{\displaystyle\int_{\mathbb{R}^N} v_n^2
dm_N}
\end{equation}
with
\begin{equation*}
\int_{\mathbb{R}^N} v^2_n dm_N = \sum_{j=1}^{ k} \alpha_j^2; \quad
\int_{\mathbb{R}^N} |\nabla v_n|^2 dm_N = \sum_{j=1}^{ k} \alpha_j^2
\lambda_j(\Omega_n).
\end{equation*}
Being 
$$\dfrac{\alpha_1^2}{\displaystyle\sum_{j=1}^{ k} \alpha_j^2}\lambda_1(\Omega_n)+...+\dfrac{\alpha_k^2}{\displaystyle\sum_{j=1}^{ k} \alpha_j^2}\lambda_k(\Omega_n)\le \lambda_k(\Omega_n)$$
yields
\begin{equation*}
\dfrac{\displaystyle\int_{\mathbb{R}^N} |\nabla v_n|^2 dm_N}{%
\displaystyle\int_{\mathbb{R}^N} v_n^2 dm_N} \leq \lambda_k(\Omega_n)
\end{equation*}
and then, for every $v \in V$, by \eqref{lim2} we get
\begin{equation*}
\dfrac{\displaystyle\int_{\mathbb{R}^N} |\nabla v|^2 dm_N}{\displaystyle%
\int_{\mathbb{R}^N} v^2 dm_N} \leq \liminf_n \lambda_k(\Omega_n).
\end{equation*}
By the min-max formula \eqref{minmax} for $\lambda_k(\Omega)$ we have
\begin{equation*}
\lambda _{k}(\hat\Omega)\leq \max_{v\in V}\dfrac{\displaystyle\int_{\mathbb{R}
^{N}}|\nabla v|^{2}dm_N}{\displaystyle\int_{\mathbb{R} 
^{N}}v^{2}dm_N}\leq \liminf_{n}\lambda _{k}(\Omega _{n}),
\end{equation*}
which concludes the proof.

\begin{proposition} \label{conv1} 
Let $\Omega_n$ be a minimizing sequence in problem \eqref{min} and let $u_n^j$ be an eigenfunction associated to $
\lambda_j(\Omega_n),$ $1 \le j \le k$, satisfying \eqref{uno}. For every $1 \le j \le k$  there exists $u^j \in H_0^1(\hat\Omega; m_N)$, with $\hat\Omega$ as in \eqref{ast}, such that
\begin{eqnarray}
& u_n^j \rightharpoonup u^j \quad &  \mbox{in } \quad H^1(\mathbb{R}^N; m_N) 
\label{4} \\
& u_n^j \rightarrow u^j &  \mbox{in} \quad L^2(\mathbb{R}^N; m_N). \label{5} 
\end{eqnarray} 
\end{proposition}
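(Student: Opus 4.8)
The plan is to separate the statement into two parts: the (routine) extraction of convergent subsequences, yielding \eqref{4}--\eqref{5}, and the (substantial) identification of each limit $u^j$ as an element of $H_0^1(\hat\Omega;m_N)$.

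First I would record uniform bounds. Testing the eigenvalue equation \eqref{P} for $u_n^j$ against $u_n^j$ itself and using the normalization \eqref{uno} gives $\int_{\R^N}|\nabla u_n^j|^2\,dm_N=\lambda_j(\Omega_n)$. Since $\Omega_n$ is a minimizing sequence for $\lambda_k$, the values $\lambda_k(\Omega_n)$ converge to the infimum in \eqref{min}, which is finite (test with any fixed ball of $m_N$-measure at most $c$), hence bounded; as $\lambda_j(\Omega_n)\le\lambda_k(\Omega_n)$ for $j\le k$, every $\lambda_j(\Omega_n)$ is bounded above, and bounded below by $\frac32N$ thanks to \eqref{RN}. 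Thus each sequence $\{u_n^j\}_n$, extended by zero, is bounded in $H^1(\R^N;m_N)$. Invoking the compact embedding of Theorem \ref{compact} together with a finite diagonal extraction over $j=1,\dots,k$ produces a single subsequence along which \eqref{4} and \eqref{5} hold; in particular $u_n^j\to u^j$ a.e.\ in $\R^N$. This settles the convergence assertions.

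The core of the argument is to show $u^j\in H_0^1(\hat\Omega;m_N)$, with $\hat\Omega=\{w>0\}$ as in \eqref{ast}, i.e.\ that the limit is supported where $w$ is positive. I would compare each eigenfunction with the torsion function $w_n$ of \eqref{pb1}. Because $\lambda_j(\Omega_n)$ ranges in the compact set $[\tfrac32N,C_0]\subset(0,+\infty)$ and the constant in Theorem \ref{ReverseHolder} depends continuously on $\lambda_j(\Omega_n)$, choosing there $r=2$, $q=\infty$ and using \eqref{uno} yields a uniform bound $\|u_n^j\|_{L^\infty(\Omega_n)}\le M$, with $M$ independent of $n$. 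By Kato's inequality for the divergence-form operator $-\diver(e^{|x|^2/2}\nabla\,\cdot\,)$ the function $|u_n^j|$ is a subsolution,
$$
-\diver\big(e^{|x|^2/2}\nabla |u_n^j|\big)\le \lambda_j(\Omega_n)\,e^{|x|^2/2}|u_n^j|\le \lambda_j(\Omega_n)\,M\,e^{|x|^2/2}\quad\text{in }\Omega_n,
$$
while $w_n$ solves $-\diver(e^{|x|^2/2}\nabla w_n)=e^{|x|^2/2}$ in $\Omega_n$ with the same null boundary data. The weak maximum principle then gives the pointwise comparison
$$
|u_n^j|\le \lambda_j(\Omega_n)\,M\,w_n\le C\,w_n\qquad\text{in }\R^N,
$$
with $C$ uniform in $n$, both sides vanishing outside $\Omega_n$.

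Finally I would pass to the limit in this inequality. From $u_n^j\to u^j$ and $w_n\to w$ a.e.\ one gets $|u^j|\le C\,w$ a.e.\ in $\R^N$. Since $w\in H^1(\R^N;m_N)$ and $\hat\Omega=\{w>0\}$, one has $w\in H_0^1(\hat\Omega;m_N)$, and the standard capacity-theoretic fact for quasi-open sets---if $v,\varphi\in H^1(\R^N;m_N)$, $\varphi\in H_0^1(A;m_N)$ for a quasi-open $A$, and $|v|\le\varphi$ a.e., then $v\in H_0^1(A;m_N)$---yields $u^j\in H_0^1(\hat\Omega;m_N)$. The hard part will be exactly this last passage: the bound $|u^j|\le Cw$ holds only almost everywhere, so it must be upgraded to a statement about quasi-continuous representatives, equivalently to membership in $H_0^1(\hat\Omega;m_N)$. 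The uniform $L^\infty$ estimate furnished by Theorem \ref{ReverseHolder} and the torsion comparison are precisely what make this upgrade possible; the remaining steps are bookkeeping.
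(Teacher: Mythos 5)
Your proposal follows essentially the same route as the paper: a uniform $L^\infty$ bound for $u_n^j$ obtained from the reverse H\"older inequality of Theorem \ref{ReverseHolder}, a pointwise comparison $|u_n^j|\le C\,w_n$ with the torsion function via the maximum principle, and a passage to the limit giving $|u^j|\le C\,w$, which yields $u^j\in H_0^1(\hat\Omega;m_N)$ because $w=0$ quasi-everywhere outside $\hat\Omega$. The only cosmetic difference is that the paper applies its maximum principle (Proposition \ref{maxprinc}) to the two functions $\Lambda M w_n\pm u_n^j$ instead of invoking Kato's inequality for $|u_n^j|$.
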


\begin{proof}
We first observe that \eqref{4} and \eqref{5} are easy consequences of Theorem \ref{compact}. It remains to prove that $u^j \in H_0^1(\hat\Omega; m_N)$. By  \eqref{chiti} (see Remark \ref{rem-qo1}) there exists a constant $M>0$, whose value is independent  from $n$, such that 
$$
||u_n^j||_\infty\le M.
$$
Suppose $\lambda_k(\Omega_n)\le \Lambda$ for every $n$ and, as in the proof of Theorem \ref{existence}, let $w_n$ be the solution to problem \eqref{pb1}. The function $\psi_n^j=\Lambda M w_n-u_n^j$ satisfies
$$
\left\{
\begin{array}{ll}
-\Delta \psi_n^j-x\cdot \nabla \psi_n^j =f_n^j & \mathrm{in}\> \Omega_n
\\
\psi_n^j=0 & \mathrm{on}\> \partial \Omega_n
\end{array}
\right.
$$
with $f_n^j=\Lambda M -\lambda_j(\Omega_n)u_n^j$ positive and bounded. By the maximum principle (see Proposition \ref{maxprinc} below)
$$
\psi_n^j \ge 0 \quad \mathrm{in} \>\Omega_n,
$$
that is
$$
u_n^j \le \Lambda M w_n.
$$
Analogously we can prove that $-\Lambda M w_n \le u_n^j$, and then
$$
|u_n^j|\le \Lambda M w_n.
$$
Passing to the limit, we get
\begin{equation}\label{6}
|u^j|\le \Lambda M w \quad \mathrm{a.e.}.
\end{equation}
Since $w=0$ quasi-everywhere in $\R^N \setminus \hat\Omega$, \eqref{6} implies that $u^j \in H_0^1(\hat\Omega;m_N)$.

\end{proof}

\begin{proposition}\label{maxprinc}
Let $D$ be an open set in $\R^N$ with finite $m_N$-measure and let $\psi$ be a solution to
\begin{equation}\label{eq-mp}
\left\{
\begin{array}{ll}
-\Delta \psi -x \cdot \nabla \psi =f & \mathrm{in}\> D
\\
\psi=0 & \mathrm{on}\> \partial D
\end{array}
\right.
\end{equation}
with $f\in L^\infty(D)$. If $f \ge 0$ in $D$ then $\psi \ge 0$ in $D$.
\end{proposition}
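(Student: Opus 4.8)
The plan is to run the classical test-function argument for a maximum principle, carried out in the weighted setting dictated by the operator. First I would rewrite problem \eqref{eq-mp} in divergence form: after multiplying by the weight, $\psi \in H_0^1(D; m_N)$ is a weak solution of $-\mathrm{div}\left(e^{|x|^2/2}\nabla \psi\right) = f\, e^{|x|^2/2}$ in $D$, which means that
$$
\int_D \nabla \psi \cdot \nabla v \, dm_N = \int_D f\, v \, dm_N \qquad \text{for every } v \in H_0^1(D; m_N).
$$
Since $f \in L^\infty(D)$ and $m_N(D) < +\infty$, the right-hand side is well defined for every $v \in L^2(D; m_N)$.

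The key step is to test this identity against the negative part $v = \psi^- := \max\{-\psi, 0\}$. Because $\psi \in H_0^1(D; m_N)$, its negative part $\psi^-$ belongs to $H_0^1(D; m_N)$ as well, so it is an admissible test function, and one has $\nabla \psi \cdot \nabla \psi^- = -|\nabla \psi^-|^2$ almost everywhere: on $\{\psi < 0\}$ the two gradients agree up to sign, while on $\{\psi \ge 0\}$ both $\psi^-$ and $\nabla \psi^-$ vanish. Substituting $v=\psi^-$ therefore gives
$$
-\int_D |\nabla \psi^-|^2 \, dm_N = \int_D f\, \psi^- \, dm_N \ge 0,
$$
the last inequality using $f \ge 0$ together with $\psi^- \ge 0$. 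Hence $\int_D |\nabla \psi^-|^2 \, dm_N \le 0$.

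To conclude, I would extend $\psi^-$ by zero to all of $\R^N$, so that $\psi^- \in H^1(\R^N; m_N)$, and invoke the weighted Poincar\'e inequality \eqref{poincare}:
$$
N \int_{\R^N} (\psi^-)^2 \, dm_N \le \int_{\R^N} |\nabla \psi^-|^2 \, dm_N \le 0.
$$
Since $N \ge 2 > 0$, this forces $\psi^- = 0$ almost everywhere, that is $\psi \ge 0$ in $D$, as claimed.

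I expect the only genuinely delicate points to be bookkeeping rather than conceptual: justifying that $\psi$ is a weak solution in $H_0^1(D; m_N)$ under the stated $L^\infty$ datum (so that the chain rule for $\nabla \psi^-$ and the passage to the weighted weak form are legitimate), and verifying that $\psi^-$ stays in the weighted space after extension by zero. The spectral gap already built into \eqref{poincare} is what makes the final step clean, replacing the usual appeal to the vanishing trace of a constant on a bounded domain.
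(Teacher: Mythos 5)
Your argument is correct, but it is genuinely different from the one in the paper. The paper does not use an energy method at all: it sets $\eta=\psi e^{|x|^2/4}$, observes that $\eta$ solves a Dirichlet problem for the harmonic oscillator $-\Delta\eta+\left(\tfrac{|x|^2}{4}+\tfrac N2\right)\eta=fe^{-|x|^2/4}$, gets $\eta\in L^\infty(D)$ from Talenti's theorem, and then runs a Phragm\'en--Lindel\"of type comparison argument on the exhaustion $D_k=D\cap B_k$, with barrier functions built from the fundamental solution $\Phi_N$, invoking the generalized maximum principle of Protter--Weinberger to handle the fact that $D$ may be unbounded (with a separate case depending on whether the origin lies in $\bar D$). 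Your route --- testing the weighted weak formulation with $\psi^-$ and finishing with the Poincar\'e inequality \eqref{poincare} --- is considerably shorter and avoids both the barrier construction and the appeal to Talenti's theorem; it also matches exactly the setting in which the proposition is applied in Proposition \ref{conv1}, where $\psi_n^j=\Lambda M w_n-u_n^j$ is by construction a finite-energy function in $H_0^1(\Omega_n;m_N)$, and it extends verbatim to quasi-open sets (the paper needs a separate approximation remark for that). What the paper's approach buys in exchange is a pointwise maximum principle that does not presuppose that $\psi$ is the variational solution: your argument requires $\psi\in H_0^1(D;m_N)$ so that $\psi^-$ is an admissible test function, so you should state explicitly that ``solution'' is taken in the variational sense furnished by the operator $\mathcal R$ (which is indeed how the paper constructs $w_n$ and the eigenfunctions), and you should record the standard fact that truncation preserves $H_0^1(D;m_N)$; with those two points made precise, your proof is complete and yields the conclusion $\psi\ge 0$ almost everywhere, which is all that is needed downstream.
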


\begin{proof}
Let $\eta=\psi e^{|x|^2/4}$. Clearly $\eta$ satisfies
$$
\left\{
\begin{array}{ll}
-\Delta \eta+\left(\frac{|x|^2}{4}+\frac{N}{2}\right)\eta=fe^{-|x|^2/4} & \mathrm{in}\> D
\\
\eta=0 & \mathrm{on}\> \partial D.
\end{array}
\right.
$$
Since $f\in L^\infty(D)$, by Talenti's theorem (see \cite{Ta1}) $\eta \in L^\infty(D)$. Let us consider the sequence of sets $D_k=D \cap B_k$, where $B_k$ is the ball centered at the origin, with radius $k$; it holds
$$
D_1 \subset D_2\subset ...,\qquad D \subseteq \cup_{k\in \N} D_k,\qquad \partial D_k=\Gamma_k \cup \Gamma_k' \quad \mathrm{with}\> \Gamma_k \subseteq \partial D, \Gamma_k' \subset D.
$$
Let
$$
\Phi_N(t)=\left\{
\begin{array}{ll}
\ln |t| & \mbox{if}\> N=2
\\ \\
-|t|^{2-N} & \mbox{if}\> N\ge 3.
\end{array}
\right.
$$
We distinguish two cases: $(i) \>0 \notin \bar D$, $(ii) \>0 \in \bar D$.

\noindent $(i)$ Let $r_0>0$ be such that $|x|>r_0$ for every $x \in \bar D$. For any $k\in \N$ such that $k>r_0$ we define 
$$
w_k(x)=\left\{
\begin{array}{ll}
\Phi_N(|x|)-\Phi_N(r_0),\quad x\in D_k
\\ \\
\Phi_N(k)-\Phi_N(r_0),\quad x\in D\setminus D_k
\end{array}
\right.
$$
and
$$
w(x)=\Phi_N(|x|)-\Phi_N(r_0),\quad x\in D.
$$
By construction $w_k(x) \le w(x)$ for every $x \in D$ and 
$$\limsup_k\left(\inf_{\Gamma_k'}\frac{\eta}{w_k}\right)=\limsup_k\left(\frac{1}{\Phi_N(k)-\Phi_N(r_0)}\inf_{\Gamma_k'}\eta\right)=0.$$
 Moreover it satisfies
$$
\left\{
\begin{array}{ll}
-\Delta w_k+\left(\frac{|x|^2}{4}+\frac{N}{2}\right)w_k\ge 0 & \mathrm{in}\> D_k
\\
w_k>0 & \mathrm{on}\> D_k \cup \partial D_k.
\end{array}
\right.
$$
By Theorem 19, p. 97 in \cite{PW} we get that $\eta \ge 0$ and hence $\psi \ge 0$ in $D$.

$(ii)$ If $0 \notin \bar D$ it is enough to consider $\tilde D_k=D_k\setminus B_{2\varepsilon}(0)$ for $\varepsilon >0$ and 
$$
\tilde w_k(x)=\left\{
\begin{array}{ll}
\Phi_N(|x|)-\Phi_N(\varepsilon),\quad x\in \tilde D_k
\\ \\
\Phi_N(k)-\Phi_N(\varepsilon),\quad x\in D\setminus D_k.
\end{array}
\right.
$$
Reasoning as in the case $(i)$ we get that $\psi \ge 0$ in $D \setminus B_{2\varepsilon}(0)$ for every $\varepsilon >0$ small enough. By continuity $ \psi \ge 0$ in D.

\end{proof}
\begin{remark}\rm 
The previous maximum principle also holds
for a quasi-open set $D$ with finite $m_N$-measure. To see that, we proceed by external approximation $D_\varepsilon$, exactly as in Remark \ref{rem-qo1}, and we use the fact that the (non-negative) solutions to problem  (\ref{eq-mp}) on $D_\varepsilon$ converge to the solution of the same problem on $D$,
thus this one is non negative.
\end{remark}

\end{document}